\newcommand{\Z}{\mathbb{Z}}
\newcommand{\GEN}{\texttt{GEN}}
\newcommand{\DNG}{\texttt{DNG}}
\newcommand{\RAV}{\texttt{RAV}}
\newcommand{\REL}{\texttt{REL}}
\DeclareMathOperator{\dic}{Dic}
\newcommand{\Addresses}{{
  \bigskip
  \footnotesize
    Zachary Gates, \textsc{Department of Mathematics \& Computer Science, Wabash College, Crawfordsville, IN 47933}\par\nopagebreak
  \textit{E-mail address}: \texttt{gatesz@wabash.edu}
  
  \medskip
  
  Robert Kelvey, \textsc{Department of Mathematical \& Computational Sciences, The College of Wooster,
    Wooster, OH 44691}\par\nopagebreak
  \textit{E-mail address}: \texttt{rkelvey@wooster.edu}

}}
\theoremstyle{definition}
\newtheorem{theorem}{Theorem}[section]
\newtheorem{definition}[theorem]{Definition}
\newtheorem{example}[theorem]{Example}
\newtheorem{lemma}[theorem]{Lemma}
\newtheorem{proposition}[theorem]{Proposition}
\newtheorem{remark}[theorem]{Remark}
\renewcommand{\cref}{\Cref}
\title{Relator Games on Groups}
\author{Zachary Gates and Robert Kelvey}
\date{}
\begin{document} 

\maketitle

\begin{abstract}
    We define two impartial games, the \textit{Relator Achievement Game} $\REL$ and the \textit{Relator Avoidance Game} $\RAV$. Given a finite group $G$ and generating set $S$, both games begin with the empty word. Two players form a word in $S$ by alternately appending an element from $S\cup S^{-1}$ at each turn. The first player to form a word equivalent in $G$ to a previous word wins the game $\REL$ but loses the game $\RAV$. Alternatively, one can think of $\REL$ and $\RAV$ as \textit{make a cycle} and \textit{avoid a cycle} games on the Cayley graph $\Gamma(G,S)$. We determine winning strategies for several families of finite groups including dihedral, dicyclic, and products of cyclic groups. 
\end{abstract}

\section{Introduction}\label{sec: introduction}
In this paper we define two $2$-player combinatorial games: the \textit{Relator Achievement Game} $\REL$ and the \textit{Relator Avoidance Game} $\RAV$. Given a finite group $G$ and generating set $S$, two players take turns choosing $s$ or $s^{-1}$, where $s$ is a generator from $S$. The only stipulation is that, if the previous player chose $s$, the next player cannot choose $s^{-1}$ and vice versa. The players' choice of group elements builds a word in $S$. The goal of $\REL$ is to be the first player to achieve a subword equivalent to the identity in $G$. The game of $\RAV$ is the mis\`{e}re version of $\REL$, meaning the first player to achieve a subword equivalent to the identity loses the game. One can play these games on the Cayley graph of $G$ formed by using the generating set $S$. Since paths in a Cayley graph correspond to words in $S$, the players' choices of generators form a path in the Cayley graph without backtracking. Hence, when viewed graphically, the goal of $\REL$ is to be the first player to make a cycle, whereas for $\RAV$ the goal is to avoid cycles.

One motivation for the development of the games $\REL$ and $\RAV$ originated from recent results by Benesh, Ernst, and Sieben in \cite{BES16-symalt, BES16-1stPub, BES17, BES19} for the combinatorial games $\GEN$ and $\DNG$, which were first defined by Anderson and Harary in \cite{AH87}. In these games, two players alternate choosing distinct elements from a finite group $G$ until $G$ is generated by the chosen elements. The first player to generate the group on their turn wins the game $\GEN$, but loses $\DNG$. Taking inspiration from this work, our goal was to create a pair of games that incorporates the geometry of a group $G$ through its Cayley graph. However, $\REL$ and $\RAV$ are distinct from the combinatorial games involving graphs found in the current literature.
For example, \textit{Cops and Robbers} (see \cite{NW83}, \cite{Quill78}) a popular pursuit-evasion game, has been studied specifically on Cayley graphs (see \cite{Frankl87}), and firefighting games have been studied on Cayley graphs as well (see \cite{Leh19}). More recently, the \textit{Game of Cycles} was introduced by Su in \cite{Su20} and expounded in \cite{cycleGame} by Alvarado, et al. This game involves planar graphs and two players taking turns marking previously unmarked edges with a chosen direction. The \textit{Game of Cycles} is the closest of these combinatorial games to $\REL$ and $\RAV$, since the goal of the game is to create a cycle. However, the parameters for doing so are very different than in our game of $\REL$.

This paper is structured as follows. In \cref{sec: two-player rel and rav} we give a precise definition (\cref{def: REL and RAV games}) of the games $\REL$ and $\RAV$ along with some examples and initial results concerning complete bipartite and complete Cayley graphs (see \cref{thm:complete-bipartite} and \cref{thm:complete-graph}). 
In \cref{sec: dihedral groups}, we explore the family of dihedral groups $D_n$, $n \geq 3$, with its canonical generating sets. We show winning strategies for the game $\REL$ in \cref{thm: 2 player dihedral REL} and for the game $\RAV$ in \cref{thm: 2-player rav dihedral}. At the end of this section, we generalize the result of \cref{thm: 2-player rav dihedral} to apply to any group with a generating set including an element of order $2$ in \cref{thm: RAV any group with order two}.
In \cref{sec: dicyclic}, we explore the family of dicyclic groups with two common generating sets. These are results \cref{thm: RAV for Dic with two generators}, \cref{thm: RAV for dic with three generators}, \cref{thm: REL for odd dicyclic}, and \cref{thm: REL for dic even}.
In \cref{sec: products of groups}, we examine $\REL$ for products of cyclic groups $\Z_n \times \Z_m$, where the results depend on $n$ modulo $m$ (\cref{theorem: Zn-Zm mod pm1 and 0}). 
In \cref{sec: three-player REL} we introduce the game $\REL_n$, which is the $n$-player version of $\REL$, and prove a winning strategy for $\REL_3$ on the dihedral groups $D_n$ in \cref{thm:3-player REL}.
Lastly, we conclude with some open questions in \cref{sec: open questions}.

\section{Two-Player Relator Games \REL ~and \RAV} \label{sec: two-player rel and rav}

In \cite{BES16-symalt, BES16-1stPub, BES17, BES19}, Benesh, Ernst, and Sieben analyze the games $\GEN$ (generate a group) and $\DNG$ (do not generate) as initially proposed by Anderson and Harary in \cite{AH87}. Given a finitely generated group $G$, two players alternate choosing elements from $G$ and pool the elements together to generate a subgroup of $G$. The first to have chosen a generating set for the whole group on their turn wins the game of $\GEN$. For example, if $G$ is a cyclic group, then Player 1 can easily win the game of $\GEN$ by choosing a generator. In the game of $\DNG$, the two players are trying to \textit{not} generate the group $G$. Instead of two players choosing elements from $G$ and then generating a subgroup, the games $\REL$ and $\RAV$ we have two players select elements from a generating set $S$ of $G$, forming an ever-growing word in $G$. 

Let $G$ be a finite group and let $S$ be a generating set for $G$ (with $e \notin S$). We define two two-player impartial combinatorial games, called the \textit{Relator Achievement Game} $\REL(G,S)$, and the \textit{Relator Avoidance Game} $\RAV(G,S)$, as follows.

\begin{definition}\label{def: REL and RAV games}
        On turn 1, Player 1 begins with the empty word $w_0$. Player 1 chooses an element $s_1\in S\cup S^{-1}$ to create the word $w_1 = w_0s_1=s_1$. The players then alternate choosing elements of $S\cup S^{-1}$. On turn $n$, with $n > 1$, the current player begins with a word 
        \[
            w_{n-1} = s_1 s_2 \dots s_{n-1}.
        \]
    
           They then select a generator $s_n \in S \cup S^{-1}$ such that $s_n \neq s_{n-1}^{-1}$ and form the word $w_n$:
       \[
        w_n = w_{n-1}s_n.
       \]
       
        If a player forms $w_n$ such that $w_n \equiv_G w_k$, that is, $w_n$ and $w_k$ represent the same element of $G$, for some $0\leq k<n$, then that player wins $\REL(G,S)$ and loses $\RAV(G,S)$, respectively.
        
        If from any position there are no legal moves, then the next player loses. Otherwise, play passes to the next player and continues as described above.
\end{definition}

\begin{remark}
When the group and generating set are clear from context, we will use the shorthand $\REL$ or $\RAV$ to refer to the Relator Achievement Game or the Relator Avoidance Game, respectively, for a group $G$ and generating set $S$.
We forbid the trivial relator $ss^{-1}$ in our games since every group contains these relators, and we are seeking non-trivial relators. We also assume in our definition that a generating set $S$ does not contain the identity for similar reasons.

For the trivial group and the cyclic group of order $2$, with their canonical generating sets, both games end due to the eventual absence of a legal move. These are in fact the only groups where this occurs.
\end{remark}

Recall that the Cayley graph $\Gamma(G,S)$ for a group $G$ and generating set $S$ is a  graph with vertices the elements of $G$ and a directed edge from vertex $g$ to vertex $h$ if $h=gs$ for some $s \in S$. Such an edge would be labeled by $s$. 

If we consider a path of edges in the Cayley graph $\Gamma(G,S)$, this will correspond to a word $w = s_1 s_2 \dots s_{n-1}s_n$ in $G$ with letters in $S \cup S^{-1}$. Therefore, one can visually play the games of $\REL$ and $\RAV$ on a Cayley graph: a players' choice of element $s_n \in S \cup S^{-1}$
will correspond to traversing an undirected edge in the Cayley graph. A player wins $\REL$ if they are the first to form a cycle (a relator) in the Cayley graph. Likewise, a player loses $\RAV$ if they are the first to form a cycle in the Cayley graph. The rule stating that a player may not choose the inverse of the last generator chosen translates to disallowing backtracking in the Cayley graph.

We mention this visual Cayley graph correspondence as a useful way to analyze the games $\REL$ and $\RAV$. It can be helpful to play these games on a Cayley graph to understand the winning strategies for different groups and generating sets. Note that, due to how players choose elements from $S \cup S^{-1}$, whenever we discuss Cayley graphs, we refer to the undirected Cayley graph.

\begin{example}\label{ex: REL-RAV cyclic}
Consider $\REL(\Z_n, \{1\})$, where $\Z_n$ denotes the additive group of integers modulo $n$ with $n>2$.
The corresponding Cayley graph for $(\Z_n, \{1\})$ is an $n$-sided polygon. Hence, the games $\REL(\Z_n, \{1\})$ and $\RAV(\Z_n, \{1\})$ are completely determined by the parity of $n$. If $n$ is even, then Player 2 will win $\REL$ and Player 1 will win $\RAV$.
If $n$ is odd, then Player 1 will win $\REL$ and Player 2 will win $\RAV$.
\end{example}

\begin{example} \label{ex: quaternions}
  Consider the quaternion group $Q_8$ with generating set $S=\{i,j\}$. We can investigate the game $\REL(Q_8, S)$ by means of the Cayley graph $\Gamma(Q_8, S)$. See \cref{fig:Q8}, where the labels $i$ and $j$ are denoted by \textcolor{blue}{blue} and \textcolor{red}{red}, respectively. Note that $\Gamma(Q_8,S)$ is a complete, bipartite graph. That is, the vertices can be partitioned into two sets $A$ and $B$ such that for any two vertices $a\in A$ and $b\in B$, there is an edge joining $a$ and $b$, and for any two elements from the same set, there is no edge between them.
  In this case, we have $A=\{\pm 1, \pm k\}$ and $B = \{\pm i, \pm j\}$. These sets of vertices are shaded \textcolor{cyan}{cyan} and \textcolor{orange}{orange}, respectively, in \cref{fig:Q8}.
  
To see who wins $\REL(Q_8, S)$, note that Player 1 must choose from the set $B$ on their first turn. Player 2 cannot backtrack to $1$ and so must move to a vertex from $A - \{1\}$. Then, Player 1 moves to another vertex from $B$ distinct from their previous choice and thus cannot win on this turn. Finally, Player 2 wins on their second turn by moving back to $1$.
\end{example}

\tikzstyle{vert-A} = [circle, draw, inner sep=0pt, minimum size=6.5mm, fill=cyan]
\tikzstyle{vert-B} = [circle, draw, inner sep=0pt, minimum size=6.5mm, fill=orange]
\tikzstyle{b} = [draw,very thick,blue]
\tikzstyle{r} = [draw, very thick, red]
\tikzstyle{g} = [draw, very thick, green, stealth-stealth]

\begin{figure}[htb]
\centering
\begin{tikzpicture}[scale=1.5,auto]
\node (1) at (135:2) [vert-A] {\scriptsize $1$};
\node (i) at (45:2) [vert-B] {\scriptsize $i$};
\node (k) at (-45:2) [vert-A] {\scriptsize $k$};
\node (j) at (-135:2) [vert-B] {\scriptsize $j$};
\node (-1) at (135:1) [vert-A] {\scriptsize $-1$};
\node (-i) at (45:1) [vert-B] {\scriptsize $-i$};
\node (-k) at (-45:1) [vert-A] {\scriptsize $-k$};
\node (-j) at (-135:1) [vert-B] {\scriptsize $-j$};

\path[b] (1) to (i);
\path[b] (i) to (-1);
\path[b] (-1) to (-i);
\path[b] (-i) to (1);

\path[b] (-j) to (k);
\path[b] (-k) to (-j);
\path[b] (j) to (-k);
\path[b] (k) to (j);

\path[r] (-k) to (i);
\path[r] (-i) to (-k);
\path[r] (k) to (-i);
\path[r] (i) to (k);

\path[r] (1) to (j);
\path[r] (j) to (-1);
\path[r] (-1) to (-j);
\path[r] (-j) to (1);

\end{tikzpicture}
\caption{Cayley Graph for $Q_8$ with generating set $\{i, j\}$.}\label{fig:Q8}
\end{figure}
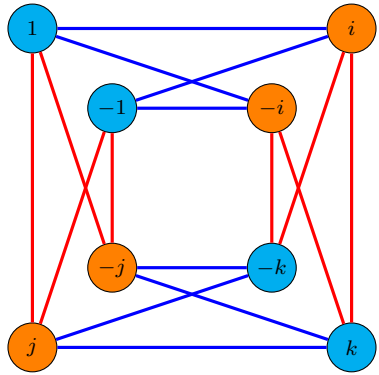

In general, if a group $G$ contains an index 2 subgroup $H$ and we let $S = G - H$, then $\Gamma(G,S)$ is complete bipartite. In this case, we have: 

\begin{theorem} \label{thm:complete-bipartite}
If $G$ is a finite group of order $2n$, with  $n\geq 2$ and $S$ is a generating set such that $\Gamma(G,S)$ is complete bipartite, then Player 2 wins $\REL(G,S)$ and Player 1 wins $\RAV(G,S)$.
\end{theorem}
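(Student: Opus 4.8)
The plan is to argue entirely inside the Cayley graph $\Gamma(G,S)$. Since Cayley graphs are vertex-transitive and $K_{m,n}$ is vertex-transitive only when $m=n$, the hypothesis forces $\Gamma(G,S)\cong K_{n,n}$; write $A,B$ for its two parts, with $e\in A$, so $|A|=|B|=n$. Under the correspondence from the excerpt, a play of either game is a non-backtracking walk in $K_{n,n}$ beginning at $e$, and a player ``forms a relator'' exactly when their move lands on a vertex already visited by the walk. I would begin by recording two structural facts. First, every vertex of $K_{n,n}$ has degree $n\geq 2$, so after the single backtrack edge is forbidden at least one move always remains; hence the ``no legal move'' clause never triggers and each game ends precisely at the first move that revisits a vertex. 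Second, the walk alternates parts, so Player~1 always moves $A\to B$ and Player~2 always moves $B\to A$; in particular, as long as the game is still in progress, every vertex Player~2 lands on lies in $A\setminus\{e\}$ (landing on $e=w_0$ would itself end the game).

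For $\REL$ I would exhibit a forced four-move win for Player~2: on move~2 play any legal move (it necessarily lands in $A\setminus\{e\}$, the backtrack vertex being $e$), and on move~4 move to $e$. One then checks that Player~1 cannot win earlier --- move~1 reaches a vertex of $B$, which is new since $e\in A$, and move~3 reaches a vertex of $B$ other than the unique previously visited $B$-vertex (backtracking to it being forbidden) --- and that Player~2's move~4 is both legal (the backtrack vertex lies in $A\setminus\{e\}$, hence is not $e$) and winning (it revisits $w_0=e$). Thus Player~2 wins $\REL(G,S)$.

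For $\RAV$ I would let Player~1 always make a move that does not form a relator, i.e.\ step to an as-yet-unvisited vertex, and show this is always possible. The crux is a counting bound: if the game reaches Player~1's $k$th turn without having ended, then Player~2's $k-1$ earlier moves landed on $k-1$ \emph{distinct} vertices of $A\setminus\{e\}$ (distinct and $\neq e$ because no relator has yet been formed), so $k-1\leq n-1$. Consequently Player~1's own $k-1$ earlier moves occupy only $k-1<n$ vertices of $B$, leaving a vertex of $B$ unvisited; being unvisited it is not the backtrack vertex, so Player~1 has a safe move. Hence Player~1 never forms a relator, and since by the first structural fact the game must nonetheless end at some relator, Player~2 forms the first one and loses $\RAV(G,S)$; equivalently, Player~1 wins it.

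The graph translation does all the heavy lifting, so I do not anticipate a real obstacle. The only point needing care is the counting step for $\RAV$: one must use that none of Player~2's landing vertices can be $e$ while the game is still in progress, for this is exactly what caps the number of Player~1's turns --- and hence the number of visited $B$-vertices --- strictly below $n$, guaranteeing Player~1 an escape on every turn.
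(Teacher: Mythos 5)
Your proposal is correct and takes essentially the same route as the paper: for $\REL$, Player 2's forced four-move return to $e$ (the paper cites its quaternion example), and for $\RAV$, the same bipartite counting, which the paper phrases as Player 2 exhausting their options in $A\setminus\{e\}$ before Player 1 runs out of fresh vertices in $B$. Your version merely makes two implicit points explicit --- that vertex-transitivity forces the bipartition to be balanced, and that the exhaustion argument is really a safe-move strategy for Player 1 justified by pigeonhole --- which is a welcome tightening but not a different proof.
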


\begin{proof}
The proof that Player 2 wins $\REL(G,S)$ follows the same argument as in \cref{ex: quaternions}. 

For $\RAV(G,S)$, the game is one of exhaustion. If $|G| = 2n$, then Player 1 has $n$ possible vertices to move to on their first turn, while Player 2 has $n-1$ options due to the game starting at the identity. In general, Player 1 has $n-k$ vertex options after their $k$th turn, while Player 2 has $n-k-1$ vertex options after their $k$th turn. These options always exist because $\Gamma$ is complete bipartite. Hence, Player 2 will exhaust their options before Player 1, and thus Player 1 wins $\RAV(G,S)$.\\
\end{proof}

For any non-trivial finite group $G$, if we let $S = G - \{e\}$, then $\Gamma(G,S)$ is a complete graph. Such a case is also easy to analyze.

\begin{theorem} \label{thm:complete-graph}
If $G$ is a finite group of order at least $3$ and $S$ is a generating set such that $\Gamma(G,S)$ is a complete graph, then Player 1 wins $\REL(G,S)$. Player 1 wins $\RAV(G,S)$ if $|G|$ is even and Player 2 wins $\RAV(G,S)$ if $|G|$ is odd.
\end{theorem}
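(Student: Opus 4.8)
The plan is to work with the undirected graph $\Gamma(G,S)$, which under the hypothesis is the complete graph $K_n$ on $n=|G|$ vertices (so $S$ consists of the $n-1$ non-identity elements of $G$). I will use the dictionary from \cref{sec: two-player rel and rav}: a position is a non-backtracking path $e=u_0,u_1,\dots,u_{m-1}$ in $K_n$; the legal moves from $u_{m-1}$ are exactly the vertices other than $u_{m-1}$ and the previous vertex $u_{m-2}$, of which there are $n-2\geq 1$, so the ``no legal move'' clause of \cref{def: REL and RAV games} never applies; and a move to $u_m$ yields $w_m\equiv_G w_k$ for some $k<m$ precisely when $u_m\in\{u_0,\dots,u_{m-1}\}$, i.e.\ when the path revisits a vertex.

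For $\REL$, I claim Player 1 wins with the following strategy: on turn $1$ move from $e$ to an arbitrary vertex $v_1$, and on turn $3$ move back to $e$. Player 2 cannot win on turn $2$, since from $v_1$ the only vertices completing a relator are $e$ and $v_1$, and both moves are illegal ($e$ by the no-backtracking rule, $v_1$ because there are no loops). So Player 2 must move to some $v_2\notin\{e,v_1\}$, which exists because $n\geq3$. Player 1 then moves from $v_2$ back to $e$; this is legal since $e\neq v_2$ and $e\neq v_1$, and it revisits $u_0=e$, so Player 1 wins.

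For $\RAV$, the game is one of exhaustion, as in the proof of \cref{thm:complete-bipartite}. The crux is that a player is forced to lose on a turn only if every legal move revisits a vertex, and in $K_n$ this happens exactly when every vertex has already been visited: if $w$ is unvisited then $w\notin\{u_{m-1},u_{m-2}\}$, so moving to $w$ is legal and safe. Hence, as long as the path is not yet Hamiltonian the player to move has a safe move and takes it, so the path gains one new vertex each turn; after turn $n-1$ all $n$ vertices are used, and the player to move on turn $n$ is forced to revisit a vertex and loses. Since Player 1 takes the odd turns and Player 2 the even ones, Player 2 loses (and Player 1 wins) $\RAV$ when $n$ is even, and Player 1 loses (and Player 2 wins) when $n$ is odd.

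I expect the only delicate points to be the bookkeeping around the no-backtracking rule --- confirming that Player 1's return to $e$ is legal in $\REL$ and that a safe extending move is always available in $\RAV$ until the path is Hamiltonian --- and checking that the ``no legal moves'' rule is never triggered for $n\geq3$. These require care in phrasing but no real computation.
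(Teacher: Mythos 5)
Your proposal is correct and follows essentially the same route as the paper: Player 1 wins $\REL$ by returning to $e$ on their second turn (Player 2's backtracking ban blocking the only relator on turn 2), and $\RAV$ is the same exhaustion argument in which safe moves add one new vertex each turn until a Hamiltonian path is complete, with parity deciding the winner. Your write-up is just slightly more explicit than the paper's about why the player to move after the Hamiltonian path loses (all legal moves revisit a vertex, rather than there being no legal moves), which is a fair refinement but not a different argument.
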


\begin{proof}
If $\Gamma(G,S)$ is complete and $|G| \geq 3$, then Player 1 wins $\REL(G,S)$ on their second turn by moving back to $e$ since Player 2 may not backtrack to $e$ on their first turn. 
$\RAV(G,S)$ is a game of exhaustion as in the complete bipartite case. If $|G|$ is even, then Player 1 will complete a Hamiltonian path in $\Gamma(G,S)$ on turn $|G|-1$ and thus win $\RAV(G,S)$ since Player 2 will have no available moves on the next turn. If $|G|$ is odd, then Player 2 wins by completing a Hamiltonian path for the same reason.
\end{proof}

While generating sets that yield complete or complete bipartite Cayley graphs allow for quick analysis of $\REL$ and $\RAV$, they are rarely canonical generating sets for groups. In this sense, $Q_8$ is an outlier with its canonical generating set yielding a complete bipartite Cayley graph.

Now suppose two groups $G$ and $H$ have isomorphic, undirected Cayley graphs, $\Gamma(G,S)$ and $\Gamma(H,T)$. 
A natural question is to ask if the games of $\REL$ and $\RAV$ will be the same for both groups. This is indeed the case. If a winning strategy dictates a player move along the edge from $g$ to $gs$ in $\Gamma(G,S)$, then the same player has a winning strategy on the other group by moving along the corresponding edge in $\Gamma(H,T)$. We state this explicitly as the following theorem.

\begin{theorem}\label{thm: iso cayley graphs}
Suppose $\Gamma(G,S)$ and $\Gamma(H,T)$ are isomorphic as undirected Cayley graphs.
A player has a winning strategy for $\REL(G,S)$ (respectively, $\RAV(G,S)$), if and only if that player has a winning strategy for $\REL(H,T)$ (respectively, $\RAV(H,T)$).
\end{theorem}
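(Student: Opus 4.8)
The plan is to show that an isomorphism of undirected Cayley graphs transports game positions and legal moves bijectively, so that a winning strategy for one game is carried over verbatim to the other. The essential point to pin down first is what data a position in $\REL$ or $\RAV$ actually consists of: it is not merely the current vertex, but the non-backtracking walk $w_n = s_1 s_2 \cdots s_n$ traversed so far, since the winning condition ``$w_n \equiv_G w_k$ for some $k<n$'' says precisely that the walk revisits a vertex, and the legality condition ``$s_n \neq s_{n-1}^{-1}$'' says the walk does not immediately backtrack. Both of these are purely graph-theoretic properties of the walk in $\Gamma(G,S)$: revisiting a vertex is ``the walk is not injective on vertices,'' and non-backtracking is ``consecutive edges of the walk are distinct.'' Neither refers to the group structure or to the edge-labels, only to the underlying undirected graph with its basepoint $e$.

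Next I would set up the correspondence. Let $\varphi\colon \Gamma(G,S)\to\Gamma(H,T)$ be a graph isomorphism of the underlying undirected graphs. Composing with a left translation of $H$ if necessary, we may assume $\varphi(e_G)=e_H$; this is legitimate because left translation by any element of $H$ is an automorphism of $\Gamma(H,T)$, so $\REL(H,T)$ and $\RAV(H,T)$ are unaffected, and it lets both games start at the image basepoint. Then $\varphi$ sends a non-backtracking walk $e_G = v_0, v_1, \dots, v_n$ in $\Gamma(G,S)$ to the walk $e_H = \varphi(v_0), \varphi(v_1), \dots, \varphi(v_n)$ in $\Gamma(H,T)$, which is again a walk (adjacency is preserved), is again non-backtracking (an isomorphism sends distinct edges to distinct edges and preserves incidence, so consecutive edges stay distinct), and revisits a vertex if and only if the original did (injectivity of $\varphi$ on vertices). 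Consequently $\varphi$ induces a bijection between the set of legal game positions of $\REL(G,S)$ and those of $\REL(H,T)$ (and identically for $\RAV$), it commutes with the ``append a legal move'' operation — a legal move from $v_n$ is an edge at $v_n$ other than the edge $v_{n-1}v_n$, and $\varphi$ matches these up bijectively — and it preserves the set of terminal positions (a player faces no legal move from $v_n$ in one game exactly when there is none in the other), as well as which player just won or lost.

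With this dictionary in hand, the theorem follows by a standard transport-of-strategy argument, which I would phrase by induction on the game tree (both games are finite since walks with a repeated vertex end them, and the number of edges is finite). Given a winning strategy $\sigma$ for a fixed player in $\REL(G,S)$, define a strategy $\sigma'$ in $\REL(H,T)$ as follows: at any reachable position $P'$, pull it back via $\varphi^{-1}$ to a position $P$ of $\REL(G,S)$ (legal and reachable, by the correspondence), let $\sigma$ choose a move from $P$, and play the $\varphi$-image of that move. Because $\varphi$ matches legal moves, terminal positions, and the win/loss labeling, the play resulting from $\sigma'$ in $\REL(H,T)$ is the $\varphi$-image of the play $\sigma$ would produce in $\REL(G,S)$ against the corresponding opponent moves; hence it ends with the same player winning. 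The same argument with $\varphi^{-1}$ in place of $\varphi$ gives the converse, and the $\RAV$ case is identical word for word since only the labeling of terminal positions as win/loss differs, not the structure. The only genuine subtlety — and the step I would be most careful about — is the first one: making fully explicit that the winning and legality conditions of \cref{def: REL and RAV games}, though stated in terms of the group $G$, depend only on the isomorphism type of the based undirected Cayley graph; once that is nailed down, everything else is routine bookkeeping on the game tree.
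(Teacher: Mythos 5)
Your proposal is correct and follows the same idea the paper uses: the paper gives only the one-line sketch preceding the theorem (transport a winning strategy edge-by-edge along the graph isomorphism), and your argument is that sketch made precise, with the useful extra care of reducing positions to based non-backtracking walks and normalizing the basepoint via a left translation. No gaps; nothing further needed.
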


We provide an example of this result in \cref{ex: dihedral with coxeter gen set is cyclic graph} at the beginning of the next section.

\section{Dihedral Groups}
\label{sec: dihedral groups}

For the dihedral groups $D_n$ of order $2n$, with $n \geq 3$, there are two common generating sets: one is the Coxeter generating set comprised of two reflections; the other is comprised of one reflection and one rotation. First we examine the Coxeter generating set. 

\begin{example}\label{ex: dihedral with coxeter gen set is cyclic graph}
Suppose $S = \{s, t\}$ is a Coxeter generating set for the dihedral group $D_n$. That is,
\[
    D_n = \langle s, t ~|~ s^2=t^2= (st)^n =e \rangle.
\]
In this case, the games $\REL(D_n, S)$ and $\RAV(D_n, S)$ have the same outcomes as $\REL(\Z_{2n}, \{1\})$ and $\RAV(\Z_{2n}, \{1\})$ since the undirected Cayley graphs $\Gamma(\Z_{2n}, \{1\})$ and $\Gamma(D_n, \{s,t\})$ are isomorphic (cf. \cref{thm: iso cayley graphs}). 
\end{example}

Hence, we focus our attention for the rest of this section on the following presentation for the dihedral groups:
\[
    D_n = \langle r, s ~|~ r^n=s^2=rsrs=e \rangle.
\]

\subsection{$\REL(D_n, \{r,s\})$}

In this section, we investigate the Relator Achievement Game on $D_n$ with generating set $\{r,s\}$. Note that each element of $D_n$ can be written uniquely as $r^is^j$, for some integers $i$ and $j$ with $0 \leq i \leq n-1$ and $0 \leq j \leq 1$.

\begin{theorem}\label{thm: 2 player dihedral REL}
If $n$ is odd, then Player 1 has a winning strategy for $\REL(D_n, \{r,s\})$. If $n$ is even, Player 1 has a winning strategy if $n\equiv 2 \mod 6$ while Player 2 has a winning strategy otherwise.
\end{theorem}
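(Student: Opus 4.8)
The plan is to pass to the Cayley graph and analyse $\REL$ there. Label the $2n$ elements of $D_n$ by $a_i=r^i$ and $b_i=r^i s$, $i\in\Z_n$; then $\Gamma(D_n,\{r,s\})$ is the prism graph (circular ladder) $Y_n$ with ``rail'' edges $a_i a_{i+1}$, $b_i b_{i+1}$ and ``rung'' edges $a_i b_i$ (all indices mod $n$), and by \cref{thm: iso cayley graphs} it suffices to study $\REL$ on $Y_n$ started at $a_0$. Because completing a cycle wins $\REL$ immediately, an optimal player never passes up a closing move, so the game is equivalent to the following path-growing game: the players alternately extend a non-backtracking path $v_0=a_0,v_1,v_2,\dots$ through distinct vertices, and the game ends on the first step $m+1$ for which the current endpoint $v_m$ already has a neighbour $v_j$ with $j\le m-2$ (a ``hot'' endpoint); the player to move on that step wins. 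Hence \textbf{Player 1 wins exactly when this first hot index $m$ is even}. Minimum degree $3$ guarantees the path never stalls, so $m\le 2n-1$. I would first dispatch the small cases $n=3,4,5,6$ (and the base cases of the residue classes below) by a direct game-tree search, then prove the general pattern.

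For \textbf{odd $n$} I would have Player 1 open $a_0\to a_1$ and thereafter always advance the angular coordinate, picking the rail so as never to leave Player 2 an immediate closing move. The claim, proved by induction on the move count, is that after each Player 1 move the path is a ``monotone'' non-backtracking path with non-hot endpoint, and that Player 2 can never be the first to reach a hot endpoint. The two sub-claims are: (i) if Player 2 also just keeps advancing, the path is forced once around a single rail-cycle and closes on step $n$, which is Player 1's since $n$ is odd; and (ii) the first time Player 2 crosses a rung, necessarily at an odd column (Player 2 moves from odd-indexed positions of the initial rail), Player 1's forced-forward response funnels Player 2 along the new rail, where any turn-back or recross hands Player 1 a closing $4$-cycle, so the eventual terminal step has the parity forced by $n$ and the crossing column both being odd. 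I expect (ii) to be where care is needed, since a recross \emph{far} from the already-visited block is not immediately punishable; one must bookkeep which columns of each rail are occupied and check that the parity persists through the resulting second-order staircase.

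For \textbf{even $n$} the graph $Y_n$ is bipartite, so every cycle has even length: if the first completed cycle has length $L$ and earliest vertex $v_j$, the game ends on step $j+L$, of the same parity as $j$. Thus \textbf{Player 1 wins iff she can force the first completed cycle to be rooted at an odd-indexed vertex} (in particular not through the start $v_0$), and Player 2 wins iff he can force it rooted at an even-indexed vertex (for instance through $v_0$). The operative lines are ``staircases'': after a rung-crossing both players get funnelled into a path that advances two columns, crosses a rung, advances two columns, crosses, alternating rails — such a staircase traverses the columns of $C_n$ with period $3$, so it first laps itself and completes a short cycle after a number of moves determined modulo $6$ by $n\bmod 3$ together with the parity of the step on which crossing began. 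Computing the resulting root index: when $n\equiv 2\pmod 6$, Player 1 can choose her first crossing so that the forced staircase first laps at an odd-indexed vertex and no earlier short cycle is available to Player 2, so Player 1 wins; when $n\equiv 0$ or $4\pmod 6$, for every crossing Player 1 might attempt Player 2 can either break the staircase to reach a hot vertex one step sooner or ride it to a lap rooted at an even-indexed vertex, and if Player 1 never crosses the path runs once around a rail-cycle, which is hot at step $m=n-1$ (odd, as $n$ is even), so Player 2 wins.

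I expect the \textbf{main obstacle} to be the even case: proving that along these staircases no player can profitably deviate before the scheduled lap, and that the lap falls on exactly the predicted step modulo $6$. This needs a case analysis organized by (a) which rail the path currently runs along, (b) the residue mod $3$ of the current column relative to the start, and (c) which columns of each rail are already occupied; the ``wraparound'' — where the advancing path re-enters the already-visited initial block — is precisely where the condition $n\bmod 3$ is extracted, and is the delicate point. The odd case is comparatively routine once sub-claim (ii) is set up correctly, and the small-$n$ checks are purely mechanical.
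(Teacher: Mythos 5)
Your plan follows the paper's own route: the ladder-graph reformulation (cf.\ \cref{thm: iso cayley graphs}), Player 1's ``always advance, never cross'' strategy with a parity count for odd $n$ --- including the transfer of the parity bookkeeping to the other rail after each of Player 2's crossings, which is essentially the published argument --- and a staircase analysis mod $6$ for even $n$. Your observation that for even $n$ bipartiteness reduces the game to forcing the parity of the root of the first completed cycle is a pleasant reframing that the paper does not make explicit.

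The genuine gap is in the even case, and it is exactly the step you defer. You assert that after a rung-crossing ``both players get funnelled'' into a staircase that crosses a rung every two columns, and you compute the lap from that line. But only the two moves immediately after a crossing are forced by the third-edge principle (\cref{rem: no third edge}); after those two forced advances the player to move is entirely free to keep running along the current rail rather than cross, so no such funnel exists, and your ``no profitable deviation before the scheduled lap'' is not a later verification but the entire content of the proof. What can actually be enforced is the paper's \cref{lem: 3 forward 1 over}: a player who enters a fresh square on one rail can guarantee entering the square three columns ahead on the \emph{opposite} rail after two moves by each player, the opponent's only freedom being whether the single crossing inside that three-column window occurs one column earlier or later. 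It is this three-columns-per-window, one-crossing unit --- six columns to return to the same rail, giving entry columns $\equiv 2$ and $\equiv 5 \bmod 6$ in the $n\equiv 2\bmod 6$ case --- that produces the mod-$6$ criterion; a two-column-per-crossing staircase is neither forcible nor of the right period, so the lap arithmetic you sketch would not come out correctly even if the funnel were forced. Repairing your even case amounts to isolating and proving a lemma of exactly this type and then redoing the wrap-around computation with the three-column period.
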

Before we begin the proof, we provide some remarks and a lemma that will aid in the proof.
\begin{remark}\label{rem: squares}
First, the Cayley graph $\Gamma(D_n, \{r,s\})$ contains $n$ ``squares", each corresponding to the relation $rsrs = e$. Given the normal form $r^is^j$, where $0\leq i\leq n-1$ and $0\leq j\leq 1$, we number the squares in increasing order by $i$. Square $1$ contains $\{e,s,r,rs\}$, Square $n$ contains $\{r^{n-1}, r^{n-1}s, e, s\}$, and, in general, Square $i$ contains $\{r^{i-1}, r^{i-1}s, r^i, r^is\}$. See \cref{fig:square} and \cref{fig: D5 Cayley Graph}.
\end{remark}

\tikzstyle{vert} = [circle, draw, inner sep=0pt, minimum size=6.5mm]
\tikzstyle{b} = [draw,very thick,blue]
\tikzstyle{bl} = [draw,very thick,black]
\tikzstyle{r} = [draw, very thick, red]
\begin{figure}[ht]
	\centering
	\begin{tikzpicture}[scale=0.75]
	\node (gs) at (45:-2) [vert] {\scriptsize $r^{i-1}s$};
	\node (grs) at (-45:2) [vert] {\scriptsize $r^{i}s$};
	\node (gr) at (45:2) [vert] {\scriptsize $r^{i}$};
	\node (g) at (-45:-2) [vert] {\scriptsize $r^{i-1}$};
	\path[bl] (g) to (gr);
	\path[bl] (g) to (gs);
	\path[bl] (gr) to (grs);
	\path[bl] (grs) to (gs);
	\end{tikzpicture}
	\caption{Square $i$ in $D_n$}
	\label{fig:square}
\end{figure}
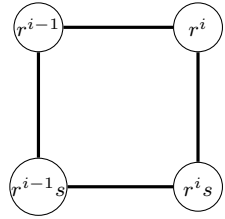 
\tikzstyle{vert} = [circle, draw, inner sep=0pt, minimum size=4.5mm]
\begin{figure}[ht]
	\centering
	\begin{tikzpicture}[scale=0.75]
	\node (e) at (0,2.8) [vert] {\scriptsize $e$};
	\node (r) at (2.7,0.87) [vert] {\scriptsize $r$};
	\node (r2) at (1.7, -2.3) [vert] {\scriptsize $r^2$};
	\node (r3) at (-1.7, -2.3) [vert] {\scriptsize $r^3$};
	\node (r4) at (-2.7, 0.87) [vert] {\scriptsize $r^4$};
	\path[bl] (e) to (r);
	\path[bl] (r) to (r2);
	\path[bl] (r2) to (r3);
	\path[bl] (r3) to (r4);
	\path[bl] (r4) to (e);
	
	\node (s) at (0,1.4) [vert] {\scriptsize $s$};
	\node (rs) at (1.35,0.44) [vert] {\scriptsize $rs$};
	\node (r2s) at (0.83,-1.14) [vert] {\scriptsize $r^2s$};
	\node (r3s) at (-0.83,-1.14) [vert] {\scriptsize $r^3s$};
	\node (r4s) at (-1.35,0.44) [vert] {\scriptsize $r^4s$};
	\path[bl] (s) to (r4s);
	\path[bl] (rs) to (s);
	\path[bl] (r2s) to (rs);
	\path[bl] (r3s) to (r2s);
	\path[bl] (r4s) to (r3s);
	
	\path[bl] (s) to (e);
	\path[bl] (rs) to (r);
	\path[bl] (r2s) to (r2);
	\path[bl] (r3s) to (r3);
	\path[bl] (r4s) to (r4);
	
	\node (1) at (1.1, 1.4) {1};
	\node (2) at (1.55, -0.45) {2};
	\node (3) at (0, -1.55) {3};
	\node (4) at (-1.55, -0.45) {4};
	\node (5) at (-1.1, 1.4) {5};
	\end{tikzpicture}
	\caption{$\Gamma(D_5, \{r,s\})$ with squares 1 through 5}
	\label{fig: D5 Cayley Graph}
\end{figure}
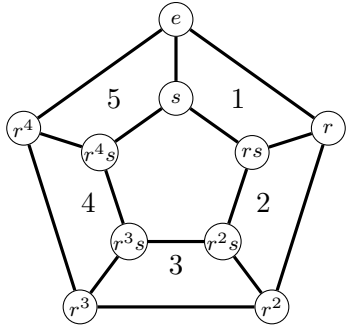 

\begin{remark}\label{rem: no third edge}
If two edges of a square have already been traversed, then neither player will move along a third edge of that square unless it is a winning play since traversing a third edge sets up the opposing player to win on their next turn.
\end{remark}

Because of the previous two remarks, once the first $r$ or $r^{-1}$ edge is chosen, the players will move in one direction, clockwise or counter-clockwise, along the Cayley graph until a cycle is completed. 

\begin{remark}\label{rem: $s$ guarantees next 2 moves}
If a player chooses $s$, then the next two moves (if they exist) are both determined by \Cref{rem: no third edge} and therefore must either both be $r$ or both be $r^{-1}$.
\end{remark}

We now introduce a definition that will be useful in the proof.
\begin{definition}\label{def: entering the square}
We say that a player \emph{enters} Square $i$ at vertex $g$ on turn $k$ if their choice of $s_k \in S \cup S^{-1}$ yields $w_k \equiv_G g$, and none of the edges of Square $i$ have been traversed on any turn $\ell < k$.
\end{definition}
When the context is clear, we will state that a player has entered a square without referring to the specific turn.

The following lemma will be used in the proof of \cref{thm: 2 player dihedral REL}. Note that the appearance of the condition $n \equiv 2 \mod 6$ in \cref{thm: 2 player dihedral REL} is due to this lemma.

\begin{lemma}\label{lem: 3 forward 1 over}
In a game of $\REL(D_n, \{r,s\})$, suppose all moves have occurred on squares 1 through $k-3$, where $5 \leq k \leq n$.
If a player enters square $k-3$ at the vertex $r^{k-4}s$, then that player can guarantee entering square $k$ at vertex $r^{k-1}$.
Similarly, if a player enters square $k-3$ at vertex $r^{k-4}$, then that player can guarantee entering square $k$ at vertex $r^{k-1}s$.
\end{lemma}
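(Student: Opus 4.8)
The plan is to analyze the game on the undirected Cayley graph $\Gamma(D_n,\{r,s\})$, which is the prism graph: an ``outer'' $n$-cycle through $r^0,\dots,r^{n-1}$, an ``inner'' $n$-cycle through $r^0s,\dots,r^{n-1}s$, and the ``rungs'' $r^i\!-\!r^is$. In this picture Square $i$ is the $4$-cycle on columns $i-1,i$, consisting of the outer edge $r^{i-1}\!-\!r^i$, the inner edge $r^{i-1}s\!-\!r^is$, the rung $r^{i-1}\!-\!r^{i-1}s$ (shared with Square $i-1$), and the rung $r^i\!-\!r^is$ (shared with Square $i+1$). First I would record what the hypotheses force: since every prior move lies on Squares $1,\dots,k-3$ while no edge of Square $k-3$ has been used, the word must have arrived at $r^{k-4}s$ along the inner edge from $r^{k-5}s$, and none of $r^{k-4},r^{k-3},r^{k-3}s,r^{k-2},r^{k-2}s,r^{k-1},r^{k-1}s$ has yet been visited. (For $r^{k-4}$ this needs a short argument: the only edge at $r^{k-4}$ not lying in Square $k-3$ is the outer edge to $r^{k-5}$, so a previous visit to $r^{k-4}$ would have required a backtrack; here $k\ge 5$ guarantees $r^{k-4}\neq e$ and that nothing wraps around the cycle.) Consequently, over the next few turns no vertex in columns $k-4,\dots,k-1$ can be revisited except by closing Square $k-3$ or by returning to $r^{k-4}s$, and $r^{k-5}s$ can only be revisited by closing Square $k-4$.

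Next I would give the strategy of the player $P$ who enters Square $k-3$ at $r^{k-4}s$, say on turn $m$; then $Q$ moves on turns $m+1,m+3$ and $P$ on turns $m+2,m+4$. The whole point is that $P$ drives the word straight through Squares $k-3,k-2,k-1$ while performing the unique inner-to-outer rung crossing within the first two moves. On turn $m+1$ the only legal moves for $Q$ (no backtrack to $r^{k-5}s$) are to $r^{k-3}s$ or to $r^{k-4}$. If $Q$ crosses to $r^{k-4}$, then $P$ replies with the outer edge to $r^{k-3}$ (going back to $r^{k-5}$ would be a third edge of Square $k-4$, and loses, letting $Q$ return to $r^{k-5}s$); if $Q$ stays at $r^{k-3}s$, then $P$ crosses at once to $r^{k-3}$. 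Either way the word sits at $r^{k-3}$ after turn $m+2$ with exactly two edges of Square $k-3$ used, and I claim the last two moves are now forced. On turn $m+3$, $Q$ cannot backtrack, and its only alternative to the outer edge $r^{k-3}\!-\!r^{k-2}$ is a third edge of Square $k-3$ (namely $r^{k-3}\!-\!r^{k-3}s$ or $r^{k-3}\!-\!r^{k-4}$, according to the case); by \cref{rem: no third edge} $Q$ will not play it, and indeed such a move loses immediately since $P$ could then close Square $k-3$ by returning to $r^{k-4}s$. So $Q$ plays $r^{k-3}\!-\!r^{k-2}$, and on turn $m+4$ $P$ plays the outer edge $r^{k-2}\!-\!r^{k-1}$, arriving at $r^{k-1}$ without having touched any edge of Square $k$ (in particular not the rung $r^{k-1}\!-\!r^{k-1}s$, as $r^{k-1}s$ was never visited); hence $P$ enters Square $k$ at $r^{k-1}$. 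If at some turn $Q$ deviates by playing a third edge of a square, then $P$ wins outright on the following move, which is only better; and one checks that every move prescribed for $P$ remains legal. The crucial design point — the one that most needs care — is that $P$ must do the crossing within the first two moves: if $P$ delayed it, $Q$ could push the word down the inner cycle to $r^{k-1}s$ and force $P$ to cross along the rung $r^{k-1}\!-\!r^{k-1}s$, which belongs to Square $k$ and would spoil the conclusion that $P$ merely ``enters'' Square $k$.

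Finally, the second assertion follows from the first by symmetry. The permutation $\phi$ of the vertex set with $\phi(r^i)=r^is$ and $\phi(r^is)=r^i$ is an automorphism of the undirected graph $\Gamma(D_n,\{r,s\})$: it fixes every rung, and since $srs=r^{-1}$ it interchanges the outer and inner cycles; moreover it maps each Square $i$ to itself, swapping $r^{k-4}s$ with $r^{k-4}$ and $r^{k-1}$ with $r^{k-1}s$. Transporting the strategy above through $\phi$ (as in the reasoning behind \cref{thm: iso cayley graphs}) shows that a player entering Square $k-3$ at $r^{k-4}$ can guarantee entering Square $k$ at $r^{k-1}s$, which is the remaining claim. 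The main obstacle overall is the bookkeeping in the middle paragraph: at each of $Q$'s turns one must verify that every deviation from the prescribed line is either illegal, loses immediately, or is excluded by \cref{rem: no third edge}, and that $P$'s own replies stay available throughout.
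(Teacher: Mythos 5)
Your proposal is correct and follows essentially the same line as the paper's proof: a case split on the opponent's reply (cross the rung to $r^{k-4}$ or continue to $r^{k-3}s$), with the entering player reaching $r^{k-3}$ on their next move and the remaining two moves to $r^{k-2}$ and $r^{k-1}$ forced by \cref{rem: no third edge}, the other case following by symmetry. Your additional bookkeeping about unvisited vertices and the explicit automorphism swapping the inner and outer cycles are just more detailed versions of what the paper leaves implicit.
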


\begin{proof}
    We assume that all prior moves have occurred on squares 1 through $k-3$. Without loss of generality, suppose that Player 1 enters square $k-3$ at the vertex $r^{k-4}s$, which must be done via a choice of $r^{-1}$. We then have two cases since Player 2 may either play $r^{-1}$ to move to $r^{k-3}s$ or $s$ to move to $r^{k-4}$.
    
    If Player 2 chooses $r^{-1}$, then Player 1 will follow by choosing $s$ to move to $r^{k-3}$. The next two moves are then forced by \cref{rem: $s$ guarantees next 2 moves} if both players are to avoid making the third edge on a square. Hence Player 2 will move to $r^{k-2}$, and Player 1 will move to $r^{k-1}$, entering square $k$ at this vertex.
    
    If Player 2 chooses $s$, then the next two moves are forced, so Player 1 moves to $r^{k-3}$, and Player 2 moves to $r^{k-2}$. Player 1 then has the option to choose $r$ and enter square $k$ at $r^{k-1}$.
    
    The case where Player 1 enters square $k-3$ at vertex $r^{k-4}$ is similar.
    
\end{proof}

\begin{proof}[Proof of  \Cref{thm: 2 player dihedral REL}]
We consider the cases where $n$ is odd and $n$ is even separately. First suppose $n$ is odd. Then Player 1 has a winning strategy described as follows using the normal form of elements of $D_n$. Player 1 chooses $r$ on their first turn. On all subsequent turns, if Player 1 is given a word equivalent to $r^k$ for some $1\leq k\leq n-1$, then Player 1 chooses $r$ to move to $r^{k+1}$. If Player 1 is given a word equivalent to $r^ks$ for some $1\leq k\leq n-1$, then Player 1 chooses $r^{-1}$ to move to $r^{k+1}s$. See \cref{fig: REL dihedral odd case example} for an example of this strategy.

Suppose Player 1 opens with $r$. By the parity of $n$, if Player 2 only chooses $r$ each turn, then Player 1 will win the game by moving from $r^{n-1}$ to $e$. This means Player 1 lands on vertices equivalent to $r^k$ with $k$ odd, while Player 2 lands on vertices $r^k$ with $k$ even. Hence, at some point, Player 2 must play an $s$ to avoid a loss. When this occurs for the first time, Player 2 moves from $r^k$ to $r^ks$, where $k$ is odd and $1\leq k \leq n-2$.

Now, by \cref{rem: $s$ guarantees next 2 moves}, the next two moves are forced: that is, Player 1 must move from $r^ks$ to $r^{k+1}s$ and then Player 2 must move from $r^{k+1}s$ to $r^{k+2}s$, both playing an $r^{-1}$ generator. If $k+2 = n$, then Player 1 will play an $s$ and win the game at $e$. Otherwise, Player 1 continues to play $r^{-1}$. Again, by the parity of $n$, if Player 2 only plays $r^{-1}$ generators as well, then eventually Player 2 will land at $r^ns = s$, and Player 1 can win at $e$ by playing an $s$.

Hence Player 2 must eventually play another $s$. But if that occurs, we will have Player 2 moving from a vertex of the form $r^ms$ to $r^m$ where $m$ is \emph{even}. Then again, by \cref{rem: $s$ guarantees next 2 moves}, Player 1 must play an $r$ followed by another $r$ by Player 2, landing at $r^{m+2}$. Note that, if $m=n-1$, then Player 1 will have won the game on their forced move, since they will have moved from $r^m$ to $r^{m+1} = r^n = e$. 
Play will continue this way until Player 2 is forced to move to $r^{n-1}$ or $s$, in which case Player 1 wins the game on the subsequent turn.

\tikzstyle{vert} = [circle, draw, inner sep=0pt, minimum size=2.5mm]
\tikzstyle{dot} = [draw,very thick,black, dotted]
\tikzstyle{bl} = [draw, very thick, black]
\begin{figure}
    \centering
    \begin{tikzpicture}[scale=0.85]
        \node (e) at (0, 4) [vert] {};
        \node at (0, 4.4) {$e$};
        \node (r) at (3.1, 2.5) [vert] {};
        \node at (3.5, 2.5) {$r$};
        \node (r2) at (3.9, -0.9) [vert] {};
        \node at (4.3, -0.9) {$r^2$};
        \node (r3) at (1.7, -3.6) [vert] {};
        \node at (2.1, -4) {$r^3$};
        \node (r4) at (-1.7, -3.6) [vert] {};
        \node at (-2.1, -4) {$r^4$};
        \node (r5) at (-3.9, -0.9) [vert] {};
        \node at (-4.3, -0.9) {$r^5$};
        \node (r6) at (-3.1, 2.5) [vert] {};
        \node at (-3.5, 2.5) {$r^6$};
        
        \path[bl] (e) to (r);
        \path[bl] (r) to (r2);
        \path[bl] (r2) to (r3);
        \path[bl] (r3) to (r4);
        \path[bl] (r4) to (r5);
        \path[bl] (r5) to (r6);
        \path[bl] (r6) to (e);
        
        \node (s) at (0, 2) [vert] {};
        \node at (0, 1.7) {$s$};
        \node (rs) at (1.6, 1.2) [vert] {};
        \node at (1.2, 0.8) {$rs$};
        \node (r2s) at (2, -0.4) [vert] {};
        \node at (1.3, -0.2) {$r^2s$};
        \node (r3s) at (0.9, -1.8) [vert] {};
        \node at (0.5, -1.4) {$r^3s$};
        \node (r4s) at (-0.9, -1.8) [vert] {};
        \node at (-0.5, -1.4) {$r^4s$};
        \node (r5s) at (-2, -0.4) [vert] {};
        \node at (-1.3, -0.1) {$r^5s$};
        \node (r6s) at (-1.6, 1.2) [vert] {};
        \node at (-1.2, 0.8) {$r^6s$};
        
        \path[bl] (s) to (rs);
        \path[bl] (rs) to (r2s);
        \path[bl] (r2s) to (r3s);
        \path[bl] (r3s) to (r4s);
        \path[bl] (r4s) to (r5s);
        \path[bl] (r5s) to (r6s);
        \path[bl] (r6s) to (s);
        
        \path[bl] (e) to (s);
        \path[bl] (r) to (rs);
        \path[bl] (r2) to (r2s);
        \path[bl] (r3) to (r3s);
        \path[bl] (r4) to (r4s);
        \path[bl] (r5) to (r5s);
        \path[bl] (r6) to (r6s);
        
        \path[r, -stealth] (e) to (r);
        \path[b, -stealth] (r) to (r2);
        \path[r, -stealth] (r2) to (r3);
        \path[b, -stealth] (r3) to (r3s);
        \path[r, -stealth] (r3s) to (r4s);
        \path[b, -stealth] (r4s) to (r5s);
        \path[r, -stealth] (r5s) to (r6s);
        
    \end{tikzpicture}
    \caption{Example of Player 1 strategy for $\REL(D_7)$ as described in \cref{thm: 2 player dihedral REL}. Player 1 moves are colored `{\color{red}red}' and Player 2 moves are colored `{\color{blue}blue}'. Player 1 only needs to choose $r$ or $r^{-1}$ generators to win. Note that when Player 2 chooses an $s$, the next two moves are forced by \cref{rem: no third edge}. Regardless of Player 2's next move, Player 1 will win the game}
    \label{fig: REL dihedral odd case example}
\end{figure}

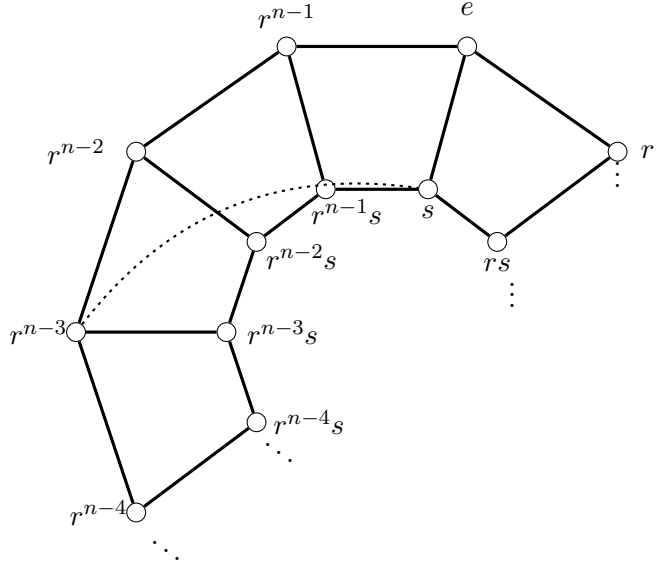
\begin{figure}
    \centering
    \begin{tikzpicture}
        \node at (-2.8, -2.8) {$\ddots$};
        \node (rn4) at (-3.2, -2.4) [vert] {};
        \node at (-3.7,-2.4) {$r^{n-4}$};
        \node (rn3) at (-4,0) [vert] {}; 
        \node at (-4.5,0) {$r^{n-3}$};
        \node (rn2) at (-3.2,2.4) [vert] {};
        \node at (-4,2.4)  {$r^{n-2}$};
        \node (rn1) at (-1.2, 3.8) [vert] {};
        \node at (-1.2,4.2)  {$r^{n-1}$};
        \node (e) at (1.2, 3.8) [vert] {};
        \node at (1.2, 4.3) {$e$};
        \node (r) at (3.2, 2.4) [vert] {};
        \node at (3.6, 2.4) {$r$};
        \node at (3.2, 2.2) {$\vdots$};
        
        \path[bl] (rn4) to (rn3);
        \path[bl] (rn3) to (rn2);
        \path[bl] (rn2) to (rn1);
        \path[bl] (rn1) to (e);
        \path[bl] (e) to (r);
        
        \node at (-1.3, -1.5) {$\ddots$};
        \node (rn4s) at (-1.6, -1.2) [vert] {};
        \node at (-0.9,-1.2) {$r^{n-4}s$};
        \node (rn3s) at (-2,0) [vert] {}; 
        \node at (-1.25,0) {$r^{n-3}s$};
        \node (rn2s) at (-1.6, 1.2) [vert] {};
        \node at (-1, 1)  {$r^{n-2}s$};
        \node (rn1s) at (-0.68, 1.9) [vert] {};
        \node at (-0.4,1.6)  {$r^{n-1}s$};
        \node (s) at (0.68, 1.9) [vert] {};
        \node at (0.68, 1.6) {$s$};
        \node (rs) at (1.6, 1.2) [vert] {};
        \node at (1.6, 0.9) {$rs$};
        \node at (1.8, 0.6) {$\vdots$};
        
        \path[bl] (rn3s) to (rn4s);
        \path[bl] (rn2s) to (rn3s);
        \path[bl] (rn1s) to (rn2s);
        \path[bl] (s) to (rn1s);
        \path[bl] (rs) to (s);
        
        \path[bl] (rs) to (r);
        \path[bl] (s) to (e);
	    \path[bl] (rn1s) to (rn1);
	    \path[bl] (rn2s) to (rn2);
	    \path[bl] (rn3s) to (rn3);
	    \path[bl] (rn4s) to (rn4);
	    
	    \path[dot, thick] (rn3) edge[bend left] (s);
        
    \end{tikzpicture}
    \caption{Portion of a general Cayley graph for $D_n$ with moves as in \cref{thm: 2 player dihedral REL}, the even case with $n \equiv 2 \mod 6$. By \cref{lem: 3 forward 1 over}, if Player 1 enters square $n-2$ at $r^{n-3}$, then they can guarantee reaching vertex $s$. }
    \label{fig: REL dihedral even case figure}
\end{figure}

Now suppose $n$ is even.
If Player 1 chooses $r$, then Player 2 wins by the same strategy used by Player 1 in the odd case. That is, starting at $r^k$, Player 2 will move to $r^{k+1}$; starting at $r^ks$, Player 2 will move to $r^{k+1}s$. Hence, suppose Player 1 begins by choosing $s$. We now consider three cases depending on the value of $n \mod 6$.

First let $n\equiv 2 \mod 6$, so $n = 6k+2$ for some $k$. Then Player 1 wins by entering square $n-2$ at $r^{n-3}$. By \cref{rem: $s$ guarantees next 2 moves}, this guarantees Player 1 enters square 3 at vertex $r^2s$. Now, by repeated use of \cref{lem: 3 forward 1 over}, Player 1 can guarantee entering square $\ell +1$ at $r^{\ell}s$, with $\ell \equiv 2 \mod 6$ and $1 \leq \ell \leq n-2$; and Player 1 guarantees entering square $m+1$ at $r^m$, with $m \equiv 5 \mod 6$ and $1 \leq m \leq n-2$. Since $n-3 = 6k-1 \equiv 5 \mod 6$, Player 1 guarantees entering square $n-2$ at $r^{n-3}$. From there, the same argument as in the proof of \cref{lem: 3 forward 1 over} shows that Player 1 wins at $s$ if Player 2 avoids making the third edge of a square.

If $n\equiv 0 \mod 6$, so $n= 6k$ for some $k$, and Player 1 starts by choosing $s$, then Player 2 wins by entering square $n-2$ at $r^{n-3}s$. Working backwards, this is guaranteed by entering square 4 at $r^3s$. Since Player 1 begins the game with $s$, the next two moves are forced to be $rs$ and $r^2s$. Player 2 then has the choice of moving to $r^3s$. 
By repeated use of \cref{lem: 3 forward 1 over}, Player 2 guarantees entering square $\ell + 1$ at $r^{\ell}s$, with $\ell \equiv 3 \mod 6$, and guarantees entering square $m+1$ at $r^m$, with $m \equiv 0 \mod 6$ (still with $1 \leq \ell, m \leq n-2)$. Since $n-3 = 6k-3 \equiv 3 \mod 6$, Player 2 guarantees entering square $n-2$ at $r^{n-3}s$. From there, the same argument as in the proof of \cref{lem: 3 forward 1 over} shows that Player 2 wins at $e$ if Player 1 avoids making the third edge of a square.

If $n\equiv 4\mod 6$, so $n = 6k+4$ for some $k\geq 1$, and Player 1 starts by choosing $s$, then Player 2 wins again by entering square $n-2$ at $r^{n-3}s$. Working backwards according to \cref{lem: 3 forward 1 over}, this is guaranteed by entering square 2 at $rs$, which is possible given that Player 1 starts by choosing $s$. By repeated use of \cref{lem: 3 forward 1 over}, Player 2 enters square $\ell + 1$ at $r^{\ell}s$, with $\ell \equiv 1 \mod 6$, and guarantee entering square $m+1$ at $r^m$, with $m \equiv 4 \mod 6$. Since $n-3 = 6k+1 \equiv 1 \mod 6$, Player 1 guarantees entering square $n-2$ at $r^{n-3}s$. From there, the same argument as in the proof of \cref{lem: 3 forward 1 over} shows that Player 2 wins at $e$ if Player 1 avoids making the third edge of a square.

Note that if $k = 0$, then \cref{lem: 3 forward 1 over} does not apply since $n=4$. In this case, we note that Player 2 will enter Square 2 at $rs$ and then wins at $e$ if Player 1 avoids making the third edge of a square by the same argument as in the proof of \cref{lem: 3 forward 1 over}.
\end{proof}

\subsection{$\RAV(D_n, \{r,s\})$}
In contrast with the achievement game (\cref{thm: 2 player dihedral REL}), we have that Player 1 has a winning strategy for $\RAV$ for any $n \geq 3$.
This strategy involves the formation of a Hamiltonian path in the Cayley graph (see \cref{fig: horseshoe strategy}).

\begin{theorem} \label{thm: 2-player rav dihedral}
Player 1 has a winning strategy for $\RAV(D_n, \{r,s\})$ for any $n \geq 3$.
\end{theorem}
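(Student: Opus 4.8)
The plan is to recognise $\RAV(D_n,\{r,s\})$ as an instance of the classical path game on the undirected Cayley graph $\Gamma=\Gamma(D_n,\{r,s\})$, which is the prism graph: an ``outer'' $n$-cycle on $\{e,r,\dots,r^{n-1}\}$, an ``inner'' $n$-cycle on $\{s,rs,\dots,r^{n-1}s\}$, and $n$ \emph{rungs}, the edges $\{r^i,r^is\}$ for $0\le i\le n-1$. First I would record two easy observations about $\RAV$ on $\Gamma$: (a) since $\Gamma$ is $3$-regular, from any vertex there are always two non-backtracking edges, so ``no legal move'' never occurs; and (b) moving onto an already-visited vertex loses immediately, so this is never done unless every legal move does it. Hence under optimal play the players trace a simple path in $\Gamma$, and a player loses precisely when both non-backtracking neighbours of the current vertex have already been visited; that is, $\RAV(D_n,\{r,s\})$ is exactly undirected vertex geography on $\Gamma$ with token starting at $e$.

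The engine of the proof is that $\Gamma$ has a perfect matching, namely the set of rungs $M=\{\{r^i,r^is\}:0\le i\le n-1\}$; write $M(v)$ for the partner of $v$. I would hand Player~1 the ``follow the matching'' strategy: on move~$1$ play $e\mapsto M(e)=s$, and whenever Player~2 has just moved the token to a vertex $u$, reply by moving it to $M(u)$. The claim to prove, by induction on Player~1's moves, is the invariant: \emph{after each of Player~1's moves the set of visited vertices is a disjoint union of rungs}. The base case (after move~$1$ the visited set is the single rung $\{e,s\}$) is clear. For the inductive step, assume the invariant holds after Player~1's $j$-th move; if Player~2 now cannot reach an unvisited vertex then Player~2 loses and we are done, so suppose Player~2 moves legally to an unvisited vertex $u$. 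Since the old visited set is a disjoint union of rungs and $u$ lies outside it, $M(u)$ also lies outside it; moreover $\{u,M(u)\}$ is a rung (hence an edge), $M(u)\ne u$, and $M(u)$ is not the vertex Player~2 just came from (that vertex is in the old visited set, $M(u)$ is not), so Player~1's reply $u\mapsto M(u)$ is a legal, non-backtracking move onto an unvisited vertex. The new visited set is again a disjoint union of rungs, completing the induction.

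Thus a Player~1 who follows this strategy is never the player who gets stuck; since the game is finite, Player~2 eventually is, so Player~1 wins $\RAV(D_n,\{r,s\})$ for every $n\ge 3$. The steps, in order: (i) set up $\Gamma$ as the prism graph and reduce $\RAV$ to vertex geography via (a) and (b); (ii) exhibit the rung matching $M$; (iii) state Player~1's matching-following strategy and prove the ``disjoint union of rungs'' invariant by induction; (iv) conclude that Player~1 never gets stuck. I expect (iii) to be the only place needing real care --- namely, phrasing the invariant so that ``$M(u)$ is unvisited and $\{u,M(u)\}$ is a fresh, non-backtracking edge'' drops out immediately --- while (i), (ii), (iv) are routine. (One can also argue more concretely by having Player~1 steer the path into an explicit Hamiltonian ``horseshoe'' $e\to r\to\cdots\to r^{n-1}\to r^{n-1}s\to\cdots\to s$, which is presumably what the referenced figure shows; but that route needs a separate case analysis of where Player~2 first drops down a rung, whereas the matching argument disposes of all of Player~2's options uniformly.)
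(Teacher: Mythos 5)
Your proof is correct, and the strategy you give is literally the paper's strategy in disguise: moving along a rung of the prism graph is exactly appending the involution $s$, so "follow the matching" is the paper's "always choose $s$." What differs is the verification. The paper's own proof of \cref{thm: 2-player rav dihedral} argues concretely: after the opening $s$ and a WLOG choice by Player 2, each of Player 2's non-suicidal replies is forced, and play marches along an explicit Hamiltonian "horseshoe" path ending at $r^{n-1}$ or $r^{n-1}s$, at which point Player 2 must close a cycle. Your argument instead proves the invariant that after each of Player 1's moves the visited set is a disjoint union of $s$-rungs, which makes Player 1's reply always safe without tracking the shape of the path or arguing that Player 2 is forced; this handles all of Player 2's deviations uniformly and is essentially the argument the paper gives later, in general form, as \cref{thm: RAV any group with order two} (any finite group with an order-two generator), which the paper explicitly notes yields a second proof of \cref{thm: 2-player rav dihedral}. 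The trade-off: the paper's concrete proof exhibits the actual optimal line of play and the Hamiltonian path structure of \cref{fig: horseshoe strategy}, while your matching invariant is shorter, avoids the informal "unless Player 2 chooses to lose immediately" forcing analysis, and generalizes immediately beyond the dihedral case.
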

\begin{proof}

Player 1 has the following strategy: always choose the generator $s$. Indeed, after choosing $s$ for the first turn of the game, the choices for Player 2 are symmetric. Without loss of generality, we assume Player 2 chooses $r^{-1}$. Then by Player 1's strategy, the game word becomes $sr^{-1}s \equiv_{D_n} r$. As Player 2 would lose by choosing $r^{-1}$, they are forced to choose $r$ and the game word is now equivalent to $r^2$.
The game then proceeds in an inductive fashion: for any $2 \leq k \leq n-1$, Player 1's strategy will move them sequentially from the vertex $r^k$ to $r^ks$ for $k$ even and from $r^{k}s$ to $r^{k}$ for $k$ odd. Player 2 is always forced to move from $r^ks$ to $r^{k+1}s$ when $k$ is even and from $r^{k}$ to $r^{k+1}$ when $k$ is odd.

Hence, unless Player 2 chooses to lose the game immediately, the players will proceed until Player 1 arrives at $r^{n-1}$ if $n$ is even or $r^{n-1}s$ if $n$ is odd, completing a Hamiltonian path, and Player 2 loses the game on their next move.
\end{proof}

\tikzstyle{b} = [draw,very thick,blue]
\tikzstyle{r} = [draw, very thick, red]
\tikzstyle{black-r} = [draw,very thick,black]
\tikzstyle{black-s} = [draw, very thick, black]
\tikzstyle{player-1} = [draw, very thick, red, stealth-]
\tikzstyle{player-2} = [draw, very thick, blue, -stealth]

\tikzstyle{vert} = [circle, draw, inner sep=0pt, minimum size=2.5mm]
\tikzstyle{dot} = [draw,very thick,black, dotted]
\begin{figure}[hbt]
    \centering
    \begin{tikzpicture}[scale=0.75]
        \node (r6) at (-3.2, -2.4) [vert] {};
        \node at (-3.7,-2.4) {$r^6$};
        \node (r7) at (-4,0) [vert] {}; 
        \node at (-4.5,0) {$r^7$};
        \node (r8) at (-3.2,2.4) [vert] {};
        \node at (-4,2.4)  {$r^8$};
        \node (r9) at (-1.2, 3.8) [vert] {};
        \node at (-1.2,4.2)  {$r^9$};
        \node (e) at (1.2, 3.8) [vert] {};
        \node at (1.2, 4.3) {$e$};
        \node (r) at (3.2, 2.4) [vert] {};
        \node at (3.6, 2.4) {$r$};
        \node (r2) at (4, 0) [vert] {};
        \node at (4.4, 0) {$r^2$};
        \node (r3) at (3.2, -2.4) [vert] {};
        \node at (3.6, -2.5) {$r^3$};
        \node (r4) at (1.2, -3.8) [vert] {};
        \node at (1.5, -4) {$r^4$};
        \node (r5) at (-1.2, -3.8) [vert] {};
        \node at (-1.2, -4.2) {$r^5$};
    
        \path[black-r] (e) to (r);
        \path[player-2] (r) to (r2);
        \path[black-r] (r2) to (r3);
        \path[player-2] (r3) to (r4);
        \path[black-r] (r4) to (r5);
        \path[player-2] (r5) to (r6);
        \path[black-r] (r6) to (r7);
        \path[player-2] (r7) to (r8);
        \path[black-r] (r8) to (r9);
        \path[black-r] (r9) to (e);
        
        \node (r6s) at (-1.6, -1.2) [vert] {};
        \node at (-1.1, -1.1) {$r^6s$};
        \node (r7s) at (-2,0) [vert] {}; 
        \node at (-1.4,0) {$r^7s$};
        \node (r8s) at (-1.6, 1.2) [vert] {};
        \node at (-1, 1)  {$r^8s$};
        \node (r9s) at (-0.68, 1.9) [vert] {};
        \node at (-0.4,1.6)  {$r^9s$};
        
        \node (s) at (0.68, 1.9) [vert] {};
        \node at (0.68, 1.6) {$s$};
        \node (rs) at (1.6, 1.2) [vert] {};
        \node at (1.3, 0.9) {$rs$};

        \node (r2s) at (2, 0) [vert] {};
        \node at (1.5, 0) {$r^2s$};
        \node (r3s) at (1.6, -1.2) [vert] {};
        \node at (1.1, -1) {$r^3s$};
        \node (r4s) at (0.6, -1.9) [vert] {};
        \node at (0.6, -1.5) {$r^4s$};
        \node (r5s) at (-0.6, -1.9) [vert] {};
        \node at (-0.25, -1.4) {$r^5s$};
        
        \path[player-2] (r8s) to (r9s);
        \path[black-r] (r8s) to (r7s);
        \path[player-2] (r6s) to (r7s);
        \path[black-r] (r6s) to (r5s);
        \path[player-2] (r4s) to (r5s);
        \path[black-r] (r4s) to (r3s);
        \path[player-2] (r2s) to (r3s);
        \path[black-r] (r2s) to (rs);
        \path[player-2] (s) to (rs);
        \path[black-r] (s) to (r9s);
        
        \path[player-1] (s) to (e);
	    \path[player-1] (r) to (rs);
	    \path[player-1] (r2s) to (r2);
	    \path[player-1] (r3) to (r3s);
	    \path[player-1] (r4s) to (r4);
	    \path[player-1] (r5) to (r5s);
	    \path[player-1] (r6s) to (r6);
	    \path[player-1] (r7) to (r7s);
	    \path[player-1] (r8s) to (r8);
	    \path[player-1] (r9) to (r9s);

    \end{tikzpicture}
    \caption{Example of Player 1 winning strategy for $\RAV(D_{10})$. Player 1 moves are colored `{\color{red}red}' and Player 2 moves are colored `{\color{blue}blue}'. Player 1's strategy is to always choose the generator $s$}
    \label{fig: horseshoe strategy}
\end{figure}
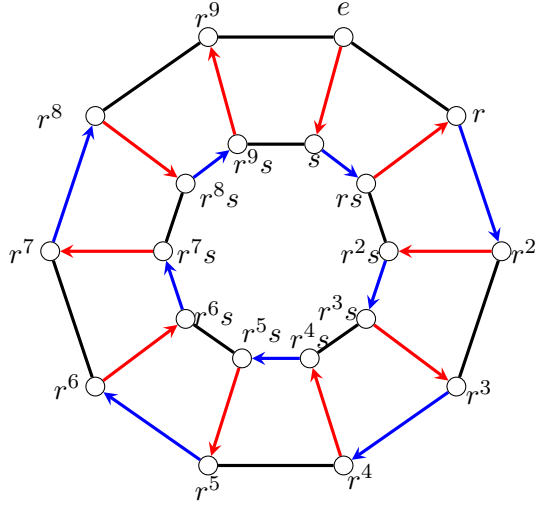

\subsection{$\RAV$ for Groups with an Order Two Generator} 
In \cref{thm: 2-player rav dihedral} we showed that Player 1 had a winning strategy by means of always choosing the order two generator $s$. We can generalize this strategy for $\RAV(G,S)$, where $S$ contains an element of order $2$. This gives another proof of \cref{thm: 2-player rav dihedral}.

\begin{theorem} \label{thm: RAV any group with order two}
Let $G$ be a finite group with generating set $S$ containing an element $s$ of order $2$. Then Player 1 has a winning strategy for the game $\RAV(G,S)$. 
\end{theorem}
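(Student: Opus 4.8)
The plan is to adapt the ``always choose $s$'' strategy from \cref{thm: 2-player rav dihedral} to an arbitrary finite group with a generating set containing an order-two element $s$. First I would have Player~1 open by playing $s$. Thereafter, I claim Player~1 can always legally respond to Player~2's move by playing $s$ again: since Player~2 never plays $s$ immediately before Player~1's turn (if Player~2 had just played $s$, that would be fine too — but the real concern is the no-backtracking rule), and $s=s^{-1}$, the only way $s$ would be an illegal move for Player~1 is if Player~2 just played $s^{-1}=s$; in that case Player~2 would have created $w_{n-1} = w_{n-2}s$, and since $w_{n-2}$ was a position Player~1 left the game in, I need to check whether Player~2's $s$-move itself already repeated a vertex — if so Player~2 already lost, and if not, Player~1 instead has a free choice and should pick any legal generator, which exists because the word has bounded length only if a repeat occurs. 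The cleaner framing: Player~1's strategy is ``play $s$ whenever legal; otherwise Player~2 just played $s$, so respond with any other legal generator.''

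Next I would track the game positions. Writing the word after Player~1's $k$-th move, the key invariant is that the vertices visited by the path come in pairs $\{g, gs\}$: each time Player~1 plays $s$, the path moves from some vertex $g$ to $gs$ (traversing the $s$-edge), and then Player~2 moves from $gs$ to $gst$ for some $t \in S \cup S^{-1}$, $t \neq s$. So after each Player~1 move the current vertex is of the form (previous Player~2 vertex)$\cdot s$, and the set of visited vertices is a disjoint union of such $s$-edges (plus possibly a dangling vertex). The crucial point is that Player~1 playing $s$ from $g$ to $gs$ can only lose for Player~1 if $gs$ was already visited; but I would argue that if $gs$ had been visited, then by the pairing invariant $g$ was visited as well — and $g$ is the vertex Player~2 just moved \emph{to}, meaning Player~2 already repeated a vertex on the previous move and hence already lost $\RAV$. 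Therefore Player~1's $s$-moves are always safe.

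The remaining task is to argue the game terminates with Player~2 losing. Since $G$ is finite and no vertex repeats on Player~1's safe moves, the pairing invariant shows the visited $s$-edges are disjoint, so there are at most $|G|/2$ of them; the game cannot continue forever. When it ends, it ends because some player repeats a vertex or has no legal move. Player~1 never does the former (by the safety argument) and, as long as the word can be extended, Player~1 always has the legal move $s$ (or the fallback move). So the game must terminate on Player~2's turn — either Player~2 is forced to repeat a vertex, or Player~2 has no legal move — and in either case Player~2 loses $\RAV(G,S)$. This gives the second proof of \cref{thm: 2-player rav dihedral} as a special case.

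The main obstacle I anticipate is handling the fallback case carefully: when Player~2 plays $s$ (so $s$ is momentarily illegal for Player~1 by the no-backtracking rule), I must check that Player~1 genuinely has some other legal, non-losing move, or else that Player~2's own $s$-move already lost the game. This requires a short case analysis on whether Player~2's move $gs$ repeats a vertex — which, by the pairing invariant, it does precisely when $g$ was already visited, i.e. when Player~1's previous position $g$ coincided with an earlier vertex; but Player~1's positions are all of the form $g's$ with the $g'$'s being earlier Player~2 positions, so one needs to verify this can't create a repeat that Player~1 is blamed for. Making this bookkeeping airtight — essentially showing the parity of ``who is forced into the first repeat'' always falls on Player~2 — is the delicate part; everything else is the same horseshoe/pairing argument as in \cref{thm: 2-player rav dihedral}.
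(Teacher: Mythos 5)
Your proposal is correct and follows essentially the same route as the paper: Player 1 always plays the involution $s$, and the pairing of visited vertices into $s$-edges shows that if Player 1's $s$-move from $g$ to $gs$ closed the first cycle, then $g$ itself was already visited, so Player 2's preceding move had already lost. The fallback case you worry about is vacuous --- since $s=s^{-1}$, Player 2 can never answer Player 1's $s$ with $s$ (that would be backtracking), so $s$ is always a legal move for Player 1, and the ``delicate bookkeeping'' you flag is already settled by your pairing invariant.
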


\begin{proof}
    The winning strategy of Player 1 is precisely that employed in \cref{thm: 2-player rav dihedral}. That is, Player 1 always chooses the order two generator $s$. Since Player 2 can never choose $s$ due to backtracking, they are forced to choose another element of $S$. 
    
    We first show that a choice of $s$ exists on each turn for Player 1. Indeed, suppose it is Player 1's turn and no such choice is available. Let $v$ denote the vertex in the Cayley graph $\Gamma(G, S)$ representing this point in the game. Because Player 1 has no choice of $s$ available, this means that the edge labeled $s$ from vertex $v$ has been traversed previously. But then the vertex $v$ must have been visited previously, meaning Player 2's last move arriving at $v$ was in fact a losing move for Player 2. Hence, if the choice of generator $s$ is not available for Player 1, then Player 2 already lost the game.
    
    We now show that Player 1's strategy is a winning strategy. Suppose for contradiction that Player 1 choosing $s$ to move from the vertex $v$ to the vertex $w$ is a losing move; that is, this forms the first cycle in the Cayley graph. This means that $w$ has previously been visited. In the case that Player 2 reached $w$ the previous time, then Player 1's strategy implies that they would move to $v$ via choosing $s$. Hence Player 2 actually formed a cycle by moving to $v$ for the second time, a contradiction. 
    
    In the case that Player 1 reached $w$ the previous time, it was from the vertex $v$, so Player 2 again must have formed a cycle by moving to $v$ for the second time.
    \end{proof}
    
As \cref{thm: RAV any group with order two} is quite general, we state a few explicit examples.

\begin{example} \label{ex: Zn and Z2 RAV}
Consider the product $\Z_n \times \Z_2$ with presentation $\langle x, s ~|~ x^n=s^2=xsx^{-1}s=e\rangle$. Then Player 1 has a winning strategy for $\RAV(\Z_n \times \Z_2, \{x,s\})$ by always choosing the generator $s$.
\end{example}

\begin{example} \label{ex: semi-direct and generalized dihedral}
We can generalize \cref{ex: Zn and Z2 RAV} to any product with a cyclic group of order two as follows.
Let $H$ be a finite group with generating set $T$ and let $\{e, s\} = \langle s \rangle \cong \Z_2$ be a cyclic group of order two with generator $s$. Suppose $G = H \rtimes \Z_2$ with canonical generating set
    \[
        S = (T \times \{e\}) \cup (\{e_H\} \times \{s\}).
    \]
Then \cref{thm: RAV any group with order two} implies that Player 1 has a winning strategy for $\RAV(G,S)$ by always choosing the generator $(e_H, s)$.

This applies in particular to the family of \textit{generalized dihedral groups}, which are defined as the the groups $G = H \rtimes \Z_2$ where $H$ is an abelian group and the action of $\Z_2$ on $H$ is that of inversion. 
\end{example}

\begin{remark}
Suppose that $G$ is a group of even order. Then $G$ must contain an element of order two. It follows from \cref{thm: RAV any group with order two} that there exists a generating set $S$ for which Player 1 has a winning strategy for the game of $\RAV(G,S)$.
\end{remark}

\section{$\RAV$ and $\REL$ for Dicyclic Groups}
\label{sec: dicyclic}

\subsection{Dicyclic $\RAV$}

The dicylic group $\dic_n$ of order $4n$ is most commonly written via the following presentation:
\[
    \dic_n = \langle a, x ~|~ a^{2n}=x^4=x^{-1}axa=e \rangle.
\]

From the defining relations, one can show that any $g \in \dic_n$ can be written in a normal form $a^i x^j$, with $0 \leq i < 2n$ and $j \in \{0,1\}$, and with the following relations:
\begin{align*}
    a^ka^{\ell} &= a^{k+\ell} \\
    a^ka^{\ell}x &= a^{k+\ell}x \\
    a^kxa^{\ell} &= a^{k-\ell}x \\
    a^kxa^{\ell}x &= a^{k-\ell+n}.
\end{align*}

\tikzstyle{b} = [draw,very thick,blue]
\tikzstyle{r} = [draw, very thick, red]
\tikzstyle{g} = [draw, very thick, green]
\tikzstyle{vert} = [circle, draw, inner sep=0pt, minimum size=2.5mm]
\tikzstyle{dot} = [draw,very thick,black, dotted]

\begin{figure}[h!bt]
    \centering
    \begin{tikzpicture}[scale=0.75]
        \node (e) at (-1.5, 3.7) [vert] {};
        \node at (-1.5, 4.2) {$e$};
        \node (a) at (1.5,3.7) [vert] {}; 
        \node at (1.5,4.2) {$a$};
        \node (a2) at (3.7,1.5) [vert] {};
        \node at (4.2, 1.5)  {$a^2$};
        \node (a3) at (3.7, -1.5) [vert] {};
        \node at (4.2, -1.5)  {$a^3$};
        \node (a4) at (1.5, -3.7) [vert] {};
        \node at (1.5, -4.2) {$a^4$};
        \node (a5) at (-1.5,-3.7) [vert] {};
        \node at (-1.5,-4.2) {$a^5$};
        \node (a6) at (-3.7, -1.5) [vert] {};
        \node at (-4.2, -1.5) {$a^6$};
        \node (a7) at (-3.7, 1.5) [vert] {};
        \node at (-4.2, 1.5) {$a^7$};
        
        \path[b] (e) to (a);
        \path[b] (a) to (a2);
        \path[b] (a2) to (a3);
        \path[b] (a3) to (a4);
        \path[b] (a4) to (a5);
        \path[b] (a5) to (a6);
        \path[b] (a6) to (a7);
        \path[b] (a7) to (e);

      \node (a6x) at (-0.4, 0.9) [vert] {};
        \node (a7x) at (0.4,0.9) [vert] {}; 
        \node (x) at (0.9,0.4) [vert] {};
        \node (ax) at (0.9, -0.4) [vert] {};
        \node (a2x) at (0.4, -0.9) [vert] {};
        \node (a3x) at (-0.4,-0.9) [vert] {};
        \node (a4x) at (-0.9, -0.4) [vert] {};
        \node (a5x) at (-0.9, 0.4) [vert] {};

        \path[b] (x) to (a7x);
        \path[b] (a7x) to (a6x);
        \path[b] (a6x) to (a5x);
        \path[b] (a5x) to (a4x);
        \path[b] (a4x) to (a3x);
        \path[b] (a3x) to (a2x);
        \path[b] (a2x) to (ax);
        \path[b] (ax) to (x);
        
        \path[r, bend left] (e) to (x);
        \path[r, bend left] (x) to (a4);
        \path[r, bend left] (a4) to (a4x);
        \path[r, bend left] (a4x) to (e);
        
        \path[r, bend left] (a) to (ax);
        \path[r, bend left] (ax) to (a5);
        \path[r, bend left] (a5) to (a5x);
        \path[r, bend left] (a5x) to (a);
        
        \path[r, bend left] (a2) to (a2x);
        \path[r, bend left] (a2x) to (a6);
        \path[r, bend left] (a6) to (a6x);
        \path[r, bend left] (a6x) to (a2);
        
        \path[r, bend left] (a3) to (a3x);
        \path[r, bend left] (a3x) to (a7);
        \path[r, bend left] (a7) to (a7x);
        \path[r, bend left] (a7x) to (a3);
    \end{tikzpicture}
    \caption{Cayley graph for $\dic_4$ with generators $a$ and $x$. The `{\color{blue}blue}' edges correspond to the generator $a$ and the `{\color{red}red}' edges to the generator $x$. On the inner octagon, if one labels the vertices $x$, $ax$, $a^2x$, $\dots$, $a^7x$, in a clockwise order, then a choice of the generator $a$ will move a player counter-clockwise.}
    \label{fig:dic4 a and x}
\end{figure}
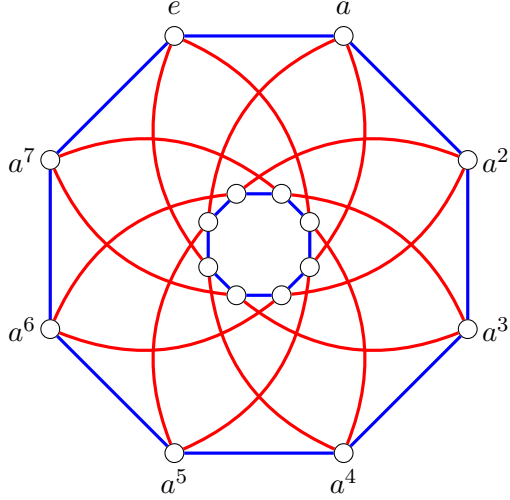

For the game $\RAV(\dic_n, \{a,x\})$, we have the following result, akin to \cref{thm: 2-player rav dihedral} and \cref{thm: RAV any group with order two}. We note here that the generator $x$ is \textit{not} of order two, but it plays a similar role to that of the order two generator from \cref{thm: RAV any group with order two}. Namely, in the normal form for elements of $\dic_n$, the possible powers of $x$ are either $0$ or $1$. So although $x$ has order four, it acts like an element of order two in the normal form.

\begin{theorem} \label{thm: RAV for Dic with two generators}
Player 1 has a winning strategy for $\RAV(\dic_n, \{a,x\})$ for $n \geq 2$.
\end{theorem}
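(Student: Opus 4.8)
The plan is to mimic the strategy from \cref{thm: RAV any group with order two}: have Player 1 always choose the generator $x$. The key structural fact, readable from \cref{fig:dic4 a and x}, is that the Cayley graph $\Gamma(\dic_n,\{a,x\})$ decomposes into two $2n$-cycles of $a$-edges (the ``outer'' cycle on $\{e,a,\dots,a^{2n-1}\}$ and the ``inner'' cycle on $\{x,ax,\dots,a^{2n-1}x\}$) together with the $x$-edges, which pair up vertices into $4$-cycles via the relation $x^4=e$ — more precisely, the $x$-edges form $n$ disjoint $4$-cycles, each of the form $a^i \to a^ix \to a^{i+n} \to a^{i+n}x \to a^i$ using $a^ixa^jx=a^{i-j+n}$. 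So from any vertex there is exactly one $x$-edge, and traversing it four times in a row returns to start.

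First I would establish that the move $x$ is always legal for Player 1, exactly as in the proof of \cref{thm: RAV any group with order two}: Player 2 can never have just played $x^{-1}$ to reach the current vertex $v$, because Player 1's previous move was an $x$-edge and the only $x$-edge at $v$ is the one Player 1 just used — wait, this needs the order-two-like behavior. The cleaner argument: if on Player 1's turn the $x$-edge out of $v$ has already been traversed, then $v$ was visited before, so Player 2's move into $v$ already completed a cycle and the game was already over. This shows Player 1 never runs out of the move $x$ unless the game has already ended in Player 1's favor.

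Next I would show the strategy never causes Player 1 to lose. Suppose Player 1 moving along the $x$-edge from $v$ to $w$ first completes a cycle; then $w$ was visited before. The subtlety versus the order-two case is that $x$-edges are not involutive in the sense that the ``return'' edge from $w$ is a different edge than the one just used? No — in the undirected Cayley graph the $x$-edge between $v$ and $w$ is a single undirected edge (the rule only forbids immediate backtracking, i.e. using the same undirected edge twice consecutively). So the argument goes through verbatim: if Player 2 reached $w$ last time, Player 1 responded by moving along the $x$-edge out of $w$; but the $x$-edge out of $w$ is the edge $wv$, so Player 1 went to $v$, contradicting that $v$ is being visited now for the first time in this sub-argument — rather, it means Player 2's move into $v$ (the second visit) already lost. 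If Player 1 reached $w$ last time, it was also via the edge $vw$ from $v$, so Player 2 then left $w$ along a non-$x$-edge, and again Player 2's subsequent return to $v$ was the losing move. Either way Player 1 never makes the losing move.

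The main obstacle — and the only real difference from \cref{thm: RAV any group with order two} — is verifying the claim that ``there is a unique $x$-edge at each vertex'' genuinely holds here even though $x$ has order $4$, not $2$. I would settle this by a short computation with the normal form: from $a^i$ the generator $x$ gives $a^ix$ and the generator $x^{-1}=x^3$ gives (using $x^{-1}a^{i\,-1}\cdots$, or directly $a^i x^{-1} = a^i x^3$) the element $a^{i}x^3 = a^{i+n}x$ by $x^2=a^n$; and from $a^ix$, applying $x$ gives $a^ix^2=a^{i+n}$ while applying $x^{-1}$ gives $a^i$. Hence at $a^i$ the two $x$-labelled neighbors are $a^ix$ and $a^{i+n}x$, and at $a^ix$ they are $a^{i+n}$ and $a^i$; in all cases the ``$+x$'' neighbor and ``$-x$'' neighbor are distinct (as $n\neq 0$ in $\Z_{2n}$ since $n\geq 1$), and each vertex lies on precisely the undirected edges of one of the $n$ four-cycles described above. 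This confirms the undirected $x$-subgraph is a disjoint union of $4$-cycles with one edge at... two edges at each vertex, and the ``no immediate backtrack'' rule is exactly ``don't reuse that undirected edge,'' so the abstract argument of \cref{thm: RAV any group with order two} applies once phrased in terms of undirected edges rather than the involution $s$. With that observation in hand the proof is a direct transcription; I would simply remark that the role of ``the $s$-edge at $v$'' is played by ``the $x$-labelled undirected edge from $v$ to $vx$,'' and that Player 1 completing this walk traces out a closed route visiting each vertex, forcing Player 2 into the first repeat.
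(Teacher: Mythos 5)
There is a genuine gap, and it is exactly the point you brushed past at the end: since $x$ has order $4$, the map $v\mapsto vx$ is \emph{not} an involution, so the sets $\{v,vx\}$ do not form a perfect matching of the vertices --- as your own normal-form computation shows, every vertex meets \emph{two} $x$-labelled edges, and these edges chain up into the $4$-cycles $a^i,\,a^ix,\,a^{i+n},\,a^{i+n}x$. This breaks the safety argument: if Player 2 had reached $w$ earlier, your strategy made Player 1 answer by moving to $wx$, which is \emph{not} the edge back to $v=wx^{-1}$ (since $wx\neq wx^{-1}$ when $x^2=a^n\neq e$), so you cannot conclude that Player 2 had already closed a cycle. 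Worse, the strategy ``always choose the generator $x$'' genuinely loses. In $\dic_3$ (normal form $a^ix^j$, $0\le i<6$), consider the legal line: P1: $x\to x$; P2: $a\to a^5x$; P1: $x\to a^2$; P2: $a\to a^3$; P1: $x\to a^3x$; P2: $a^{-1}\to a^4x$; P1: $x\to a$; P2: $x\to ax$ (its only non-losing legal move, since $a^2$ and $e$ are visited and $x^{-1}$ is backtracking); P1: $x\to a^4$; P2: $a\to a^5$ (again its only non-losing legal move, since $a^3$ and $a^4x$ are visited and $x^{-1}$ is backtracking). Now your strategy directs Player 1 to play $x$ from $a^5$, arriving at $a^5x$, which was already visited on Player 2's first move --- Player 1 completes the first cycle and loses $\RAV$. (Player 1 could survive here by playing $x^{-1}$ to $a^2x$, but that is a different strategy.) Your legality argument has the same ambiguity, since ``the $x$-edge out of $v$'' is not well defined.

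The correct adaptation --- and the one the paper uses --- pairs vertices by the normal form rather than by right multiplication by $x$: Player 1 plays $x$ when at $a^k$ (moving to $a^kx$) and plays $x^{-1}$ when at $a^kx$ (moving back to $a^k$). The pairing $a^k\leftrightarrow a^kx$ is an honest perfect matching (this is the precise sense in which $x$ ``acts like an element of order two'' in the normal form), so after each Player 1 move the visited set is a union of complete pairs; from that invariant both halves of your plan go through essentially as in \cref{thm: RAV any group with order two}: the paired move is always available (otherwise Player 2's previous move was already illegal or losing), and any first repetition must have been created by Player 2. In the counterexample line above, this strategy would have had Player 1 answer $a^5x$ with $x^{-1}$ to $a^5$, never straying into the trap.
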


\begin{proof}
    Note that for $n=2$, we have $\dic_2 = Q_8$. Hence, this case is covered by \cref{ex: quaternions}. For the remainder of the proof, suppose $n \geq 3$. See \cref{fig:dic4 a and x} for a Cayley graph of $\dic_4$.
    
    Using the normal form described above, Player 1 has a winning strategy by choosing $x$ on their first turn and then moving from $a^k x$ to $a^k$ by choosing $x^{-1}$ or $a^k$ to $a^k x$ by choosing $x$ on any subsequent turn. Such a move is always available to Player 1, which can be shown via an inductive argument. 
    
    The base case is clear: Player 1 starts with a choice of $x$. Based off Player 2's choice, we have the following three possibilities:
    \[
        xa \equiv_G a^{2n-1}x, \hspace{10pt} x^2\equiv_Ga^n, \hspace{10pt} xa^{-1} \equiv_G ax.
    \]
    For Player 1's next turn, they can thus choose $x^{-1}$, $x$, or $x^{-1}$, respectively, resulting in:
    \[
        xax^{-1} \equiv_G a^{2n-1}, \hspace{10pt} x^3 \equiv_G a^nx, \hspace{10pt} xa^{-1}x^{-1} \equiv_G a.
    \]
    
Note that each of the words $xax^{-1}$, $x^3$, and $xa^{-1}x^{-1}$ are one generator away from a relator. 
Now assume that for the first $m$ turns, Player 1's strategy above has been successfully employed. We want to show that on Player 1's next turn, i.e., on turn $m+2$ of the game, a move from $a^k x$ to $a^k$ via a choice of $x^{-1}$ or a move from $a^k$ to $a^k x$ by choice of $x$ is possible. 

Suppose we are in the first case and $w_{m+1} \equiv_G a^kx$. Assume that Player 2's last turn was via a choice of $x$. This means Player 2's choice of $x$ changed our word from the normal form of $a^k$ to $a^kx$. Since we are assuming Player 1's strategy has been successfully employed for the first $m$ turns, Player 1 already made the move from $a^kx$ to $a^k$ via choosing $x^{-1}$ on the $m$-th turn. Hence, Player 2's choice of $x$ is itself an illegal move.
Similarly, one can show that Player 1's strategy is possible if $w_{m+1} \equiv_G a^k$.
    
    Knowing that Player 1's strategy is always possible, we now show that this strategy is a winning strategy. Assume that Player 1 loses the game.
    If Player 1 forms the first relator at the word equivalent to $z= a^kx$ or $z=a^k$, where $0\leq k\leq 2n-1$, then the word one step prior must have been $y= a^k$ or $y=a^kx$ by Player 1's strategy. 
    Since Player 1 forms a relator at $y$, then one of the two players must have landed at $z$ earlier in the game. By the same argument given in the proof of \cref{thm: RAV any group with order two}, we see that Player 2 necessarily landed at $z$ previously. Hence Player 2 loses the game.
\end{proof}

\begin{figure}[hbt]
    \centering
    \begin{tikzpicture}[scale=0.75]
        \node (e) at (-1.5, 3.7) [vert] {};
        \node at (-1.5, 4.2) {$e$};
        \node (a) at (1.5,3.7) [vert] {}; 
        \node at (1.5,4.2) {$a$};
        \node (a2) at (3.7,1.5) [vert] {};
        \node at (4.2, 1.5)  {$a^2$};
        \node (a3) at (3.7, -1.5) [vert] {};
        \node at (4.2, -1.5)  {$a^3$};
        \node (a4) at (1.5, -3.7) [vert] {};
        \node at (1.5, -4.2) {$a^4$};
        \node (a5) at (-1.5,-3.7) [vert] {};
        \node at (-1.5,-4.2) {$a^5$};
        \node (a6) at (-3.7, -1.5) [vert] {};
        \node at (-4.2, -1.5) {$a^6$};
        \node (a7) at (-3.7, 1.5) [vert] {};
        \node at (-4.2, 1.5) {$a^7$};
        
        \path[b] (e) to (a);
        \path[b] (a) to (a2);
        \path[b] (a2) to (a3);
        \path[b] (a3) to (a4);
        \path[b] (a4) to (a5);
        \path[b] (a5) to (a6);
        \path[b] (a6) to (a7);
        \path[b] (a7) to (e);
        
      \node (a6x) at (-0.4, 0.9) [vert] {};
        \node (a7x) at (0.4,0.9) [vert] {}; 
        \node (x) at (0.9,0.4) [vert] {};
        \node (ax) at (0.9, -0.4) [vert] {};
        \node (a2x) at (0.4, -0.9) [vert] {};
        \node (a3x) at (-0.4,-0.9) [vert] {};
        \node (a4x) at (-0.9, -0.4) [vert] {};
        \node (a5x) at (-0.9, 0.4) [vert] {};

        \path[b] (x) to (a7x);
        \path[b] (a7x) to (a6x);
        \path[b] (a6x) to (a5x);
        \path[b] (a5x) to (a4x);
        \path[b] (a4x) to (a3x);
        \path[b] (a3x) to (a2x);
        \path[b] (a2x) to (ax);
        \path[b] (ax) to (x);
        
        \path[r, bend right] (e) to (a4x);
        \path[r, bend right] (a4x) to (a4);
        \path[r, bend right] (a4) to (x);
        \path[r, bend right] (x) to (e);
        
        \path[r, bend right] (a) to (a5x);
        \path[r, bend right] (a5x) to (a5);
        \path[r, bend right] (a5) to (ax);
        \path[r, bend right] (ax) to (a);
        
        \path[r, bend right] (a2) to (a6x);
        \path[r, bend right] (a6x) to (a6);
        \path[r, bend right] (a6) to (a2x);
        \path[r, bend right] (a2x) to (a2);
    
        \path[r, bend right] (a3) to (a7x);
        \path[r, bend right] (a7x) to (a7);
        \path[r, bend right] (a7) to (a3x);
        \path[r, bend right] (a3x) to (a3);
        
        \path[g] (e) to (ax);
        \path[g] (ax) to (a4);
        \path[g] (a4) to (a5x);
        \path[g] (a5x) to (e);
        
        \path[g] (a) to (a2x);
        \path[g] (a2x) to (a5);
        \path[g] (a5) to (a6x);
        \path[g] (a6x) to (a);

        \path[g] (a2) to (a3x);
        \path[g] (a3x) to (a6);
        \path[g] (a6) to (a7x);
        \path[g] (a7x) to (a2);

        \path[g] (a3) to (a4x);
        \path[g] (a4x) to (a7);
        \path[g] (a7) to (x);
        \path[g] (x) to (a3);
        
    \end{tikzpicture}
    \caption{Cayley graph for $\dic_4$ with generators $a$, $b$ and $c$. The `{\color{blue}blue}' edges correspond to the generator $a$, the `{\color{red}red}' to the generator $b$, and the `{\color{green}green}' edges to $c$. One has a normal form $a^ib^j$ with $0 \leq i < 2n$ and $j \in \{0,1\}$ and the inner vertices are labeled $b$, $ab$, $ab^2$, $\dots$, $ab^7$, in a clockwise manner.} 
    \label{fig:dic4 abc}
\end{figure}
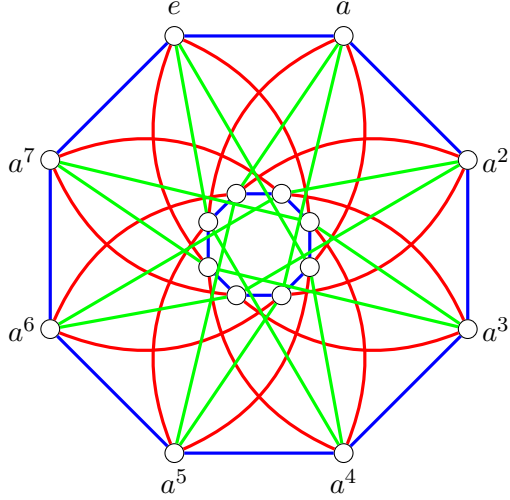

There is another common presentation for the dicyclic groups, namely, as an instance of a triangle group or binary von Dyck group:
    \[
        \dic_n = \langle a, b, c ~|~ a^n=b^2=c^2=abc \rangle.
    \]
See \cref{fig:dic4 abc} for the Cayley graph of $\dic_4$ with this presentation.

Note that the triangle presentation for $\dic_n$ is isomorphic to the one given above via the mapping
 \[
     a \mapsto a, x \mapsto b^{-1}, a x^{-1} \mapsto c.
    \]
For the game of $\RAV(\dic_n, \{a,b,c\})$, we have the same results as \cref{thm: RAV for Dic with two generators}.

\begin{theorem} \label{thm: RAV for dic with three generators}
Player 1 has a winning strategy for $\RAV(\dic_n, \{a,b,c\})$.
\end{theorem}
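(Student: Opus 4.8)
The plan is to reuse, essentially verbatim, the winning strategy and analysis from \cref{thm: RAV for Dic with two generators}, once we understand what the extra generator $c$ contributes to the Cayley graph. As recorded in the excerpt, every element of $\dic_n$ has a unique normal form $a^ib^j$ with $0\le i<2n$ and $j\in\{0,1\}$, so $b$ plays exactly the role that $x$ did before: in $\Gamma(\dic_n,\{a,b,c\})$ the $b$-labeled edge out of $a^k$ lands on $a^kb$, and since $b^2=a^n\neq e$ these $b$-edges assemble into $n$ four-cycles rather than a matching. Taking a pair of opposite edges from each such four-cycle, however, the set
\[
  M=\bigl\{\,\{a^k,\,a^kb\}\;:\;0\le k<2n\,\bigr\}
\]
is a perfect matching of $\Gamma(\dic_n,\{a,b,c\})$, since each vertex $a^ib^j$ lies on exactly one of its edges. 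The $c$-edges, which join $a^k$ to $a^{k+1}b$, are not used by $M$.

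Player 1's strategy is then the direct analogue of the one in \cref{thm: RAV for Dic with two generators}: always move along the $M$-edge at the current vertex, i.e.\ play $b$ when the current word equals $a^k$ and play $b^{-1}$ when it equals $a^kb$. I would then establish the two points exactly as in the proof of \cref{thm: RAV any group with order two}. First, on every Player 1 turn the prescribed $M$-move is legal: it is not a backtrack, since a backtrack would mean Player 2's previous move also traversed that $M$-edge, which together with Player 1 having just paired its two endpoints forces the current word to have already appeared one move earlier (so Player 2 had already lost); and the $M$-edge is not already used, since otherwise its current endpoint was visited before, again meaning Player 2 formed a relator first. Second, Player 1's move is never the first relator: if Player 1 moving from $v$ to its $M$-partner $M(v)$ closed the first cycle, then $M(v)$ had been visited earlier, and tracing backward through Player 1's pairing strategy forces an earlier repetition, a contradiction. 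Hence Player 2 is always the first to form a relator (or to be left without a legal move), so Player 1 wins. The case $n=2$, where $\dic_2=Q_8$, is handled by the same argument since $M$ is still a perfect matching of the three-generator Cayley graph.

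The one genuinely new issue — and the step I expect to demand the most care — is checking that adjoining $c$ as a third generator gives Player 2 no new leverage. Player 2 now has more edges available out of each vertex, so there are more cases of the form ``Player 2's move brings the game to vertex $v$''; but the legality-and-safety argument above never refers to which edge Player 2 chose, only to the fact that the game has not yet ended. Thus the argument is completely insensitive to the $c$-edges, and all that really needs verifying is the elementary fact that $b^2=a^n\neq e$, from which the $b$-edges furnish the perfect matching $M$; everything else is quoted from \cref{thm: RAV for Dic with two generators} and \cref{thm: RAV any group with order two}.
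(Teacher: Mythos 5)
Your proposal is correct and follows essentially the same route as the paper: Player 1 plays $b$ or $b^{-1}$ to toggle between $a^k$ and $a^kb$ in the normal form, and the legality/safety analysis is the one from \cref{thm: RAV for Dic with two generators} and \cref{thm: RAV any group with order two}, with the observation that the argument never depends on which generator (including $c^{\pm 1}$) Player 2 chooses. Your perfect-matching phrasing is just a clean repackaging of that same strategy, not a different argument.
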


\begin{proof}
Because we are still dealing with the same group as before, and by the isomorphism described above, we can describe group elements via the normal form $a^i b^j$ with $0 \leq i < 2n$ and $j \in \{0,1\}$. Then Player 1 has a winning strategy by choosing $b$ on their first turn, and then moving from $a^k b$ to $a^k$ by choosing $b^{-1}$, or moving from $a^k$ to $a^k b$ by choosing $b$ on their subsequent turns. The only addition to the previous argument is accounting for the generator $c$. Because $c=ab$, we have $bc = bab \equiv_{\dic_n} a^{n-1}$ and $b c^{-1} = b(b^{-1}a^{-1}) \equiv_{\dic_n} a^{2n-1}$. Hence, the base case given in \cref{thm: RAV for Dic with two generators} can be extended to include the choice of $c$ or $c^{-1}$. The rest of the proof follows exactly as in \cref{thm: RAV for Dic with two generators}.
\end{proof}

\subsection{Dicyclic $\REL$}

We prove in this subsection winning strategies for the Relator Achievement Game for both canonical generating sets of $\dic_n$.

In \cref{fig: for the long proof} we give a simplified, partial Cayley graph for $\dic_n$, labeled with respect to the generating set $\{a,x\}$, that may provide a visual aid for the proof of \cref{thm: REL for odd dicyclic} below. In \cref{fig: for the long proof}, the inner and outer $2n$-gons are given by concentric circles. Instead of drawing all edges, note that a choice of the generator $a$ moves one clockwise on the outer circle, but counter-clockwise on the inner circle. A choice of the generator $x$ or $x^{-1}$ will move one from the inner to outer circle or vice versa.

\begin{theorem} \label{thm: REL for odd dicyclic}
Player 1 has a winning strategy for $\REL(\dic_n, \{a,x\})$ for odd $n \geq 3$.
\end{theorem}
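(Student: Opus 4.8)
The plan is to give an explicit winning strategy for Player~1 and to verify it by a case analysis built on the ``square'' structure of $\Gamma(\dic_n,\{a,x\})$, in the same spirit as the proof of \cref{thm: 2 player dihedral REL}. The point of the hypothesis that $n$ is odd is this: when $n$ is odd the abelianisation of $\dic_n$ is cyclic of order $4$, so there is no homomorphism $\dic_n \to \Z_2$ sending both $a$ and $x$ to $1$, and hence $\Gamma(\dic_n,\{a,x\})$ is \emph{not} bipartite. Concretely, the closed walk $e,\;x,\;x^{2}=a^{n},\;a^{n+1},\;\dots,\;a^{2n}=e$ --- two rung edges followed by an arc of $n$ edges around the outer circle --- is a simple cycle of \emph{odd} length $n+2$. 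Player~1 will force the game to end with the completion of an odd cycle of this ``two rungs plus an arc'' shape, hence on an odd-numbered (Player~1) move.

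First I would record the local geometry. In the normal form $a^{i}x^{j}$ the $4n$ vertices split into an outer $2n$-cycle $\{a^{i}\}$ and an inner $2n$-cycle $\{a^{i}x\}$ (on which a choice of $a$ runs one way and the other way, respectively), joined by rung edges: $x$ joins $a^{i}$ to $a^{i}x$ and joins $a^{i}x$ to $a^{i+n}$, while $x^{-1}$ joins $a^{i}$ to $a^{i+n}x$ and joins $a^{i}x$ to $a^{i}$; see \cref{fig: for the long proof}. Each edge lies in exactly two $4$-cycles (``squares''), and I would prove the dicyclic analogues of \cref{rem: no third edge} and \cref{rem: $s$ guarantees next 2 moves}: once two edges of a square have been traversed no player traverses a third edge of it except as a winning move, and consequently a choice of $x$ or $x^{-1}$ immediately following an $a^{\pm 1}$ move forces the next one or two moves. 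These constraints sharply restrict Player~2's freedom. For instance, if Player~2 answers the opening with a second rung move, the two ``$x$'' edges $e$--$x$--$a^{n}$ pin the square $\{e,x,a^{n},a^{n}x\}$ and deny Player~2 the tempting $x^{4}=e$ finish; Player~1 then continues along the outer circle toward $e$, closing the $n+2$ cycle above.

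The strategy itself: Player~1 opens into the inner circle (the move $x$). On each later turn Player~1 keeps the running move count congruent modulo $2$ to where it would stand along the length-$(n+2)$ target cycle. The key arithmetic observation is that every switch of circle along a rung changes the outer coordinate by $n$, which is odd, so a complete excursion off a circle and back contributes the same parity as the two rung moves of the base cycle. In practice: while both players run along one of the $2n$-cycles Player~1 mirrors Player~2's direction; whenever Player~2 switches circles Player~1 follows (or completes the arc when the square constraints allow), always taking the continuation for which the set of visited vertices remains a contiguous arc on each circle and no vertex but Player~1's current one has two visited neighbours. The verification is an induction on the number of moves played: after each Player~1 move the current vertex has no visited neighbour other than its predecessor, so Player~2 can never close a cycle; Player~1 always has a safe continuation; and when the square constraints finally force the walk to close, it is Player~1's turn and the cycle completed has the odd length $n+2$. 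Since the game is finite it must therefore terminate on a Player~1 move.

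The main obstacle is the bookkeeping in the inductive step when Player~2 jumps repeatedly between the two circles. One has to rule out any schedule of such jumps that would either (i) present Player~2 with a cycle-closing move on an even turn, or (ii) leave Player~1 with legal moves only onto vertices already carrying two visited neighbours. Controlling (i) and (ii) requires a precise running description of the visited set --- that it stays a contiguous arc on each circle with a bounded number of rung crossings --- together with the parity facts that $n$ is odd (so $n+2$ is odd) and that $2n$ is not divisible by $4$. I expect the even-$n$ case (\cref{thm: REL for dic even}) to go differently precisely because there $\Gamma(\dic_n,\{a,x\})$ is bipartite, so no odd target cycle is available.
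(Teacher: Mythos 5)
Your proposal is a strategy sketch, not a proof, and the gap sits exactly where the real difficulty of this theorem lies. The heart of your argument is the claim that Player~1 can maintain the invariant ``after each of my moves the current vertex has no visited neighbour other than its predecessor, the visited set is a contiguous arc on each circle,'' and that when the square constraints finally force a closure it happens on Player~1's turn because the completed cycle is the odd cycle $e,x,a^{n},a^{n+1},\dots,a^{2n}=e$ of length $n+2$. None of this is established: you explicitly defer the inductive step in which Player~2 ``jumps repeatedly between the two circles,'' and that bookkeeping is not a routine verification --- it is the entire content of the theorem. The paper's proof makes this visible: Player~1 opens with $a$ (not $x$), and the argument is a lengthy explicit case analysis around the fulcrum positions $a^{n-2}$, $x$, and $a^{n-1}x$ (cases (1A)--(1C), (2A)--(2B), (3A)--(3D)), tracking precisely at which powers $a^{\ell}$, $a^{n+k}$ Player~2 can afford to switch circles and showing each such schedule either loses outright or recurses into an earlier case with a strictly larger parameter. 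Your proposal contains no analogue of this; ``always taking the continuation for which the visited set remains a contiguous arc and no vertex but mine has two visited neighbours'' presupposes such a continuation exists, which is exactly what must be proved.

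The parity heuristic also cannot carry the weight you put on it. The graph $\Gamma(\dic_n,\{a,x\})$ is indeed non-bipartite for odd $n$, but it also contains many even cycles: the two $2n$-gons, the $4$-cycles coming from the relator $x^{-1}axa$, and the $4$-cycles $a^{i},a^{i}x,a^{i+n},a^{i+n}x$ coming from $x^{4}=e$ (so, incidentally, an $x$-edge lies in more than two squares, contrary to your local-geometry claim). The first cycle completed in actual play need not be your target cycle, and even if an odd cycle is completed, the parity of the closing move is the parity of the cycle length \emph{plus} the length of the tail preceding the first visit to the repeated vertex; so ``odd cycle $\Rightarrow$ Player~1 closes it'' is false without controlling the tail, which again requires the forced-move analysis you have not done. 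Compare \cref{thm: REL for dic even}: there Player~2 wins not by a bare bipartiteness/parity observation but by a mirroring strategy whose correctness is argued move by move. To repair your write-up you would need either to specify Player~1's reply in every configuration Player~2 can create (recovering something like the paper's case analysis), or to prove a genuine invariant lemma playing the role of \cref{lem: 3 forward 1 over}; as it stands the proposal identifies the right landscape (two $2n$-gons joined by rungs, forced moves from squares) but leaves the theorem unproved.
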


\begin{proof}

Player 1 begins by choosing\footnote{By a symmetrical argument, Player 1 can choose either $a$ or $a^{-1}$ for their first move. For simplicity, we choose $a$} $a$ and will continue to choose $a$ until Player 2 chooses $x$ or $x^{-1}$. We first show that Player 2 loses if they always choose $a$ and in fact that Player 2 must choose $x$ or $x^{-1}$ before reaching $a^{n-2}$.
If both players choose only $a$, Player 1 will arrive at $a^{n-2}$. Note that Player 1 lands at words equivalent to $a^r$, $r$ odd, while Player 2 lands at words equivalent to $a^r$, with $r$ even. Player 2 has only three possible moves from $a^{n-2}$, and we can explore each one to see that Player 1 necessarily has a winning strategy from this position:
    \begin{enumerate}
        \item[(1A)] Suppose Player 2 chooses another $a$ to move from $a^{n-2}$ to $a^{n-1}$. Then Player 1 will choose the generator $x$, arriving at $a^{n-1}x$. If Player 2 follows with either another $x$ or an $a^{-1}$, then Player 1 wins at $e$. Otherwise, if Player 2 chooses $a$, then Player 1 wins at the previously visited $a^{n-2}$.
        
        \item[(1B)] Suppose Player 2 chooses $x$, moving from $a^{n-2}$ to $a^{n-2}x$. Then Player 1 will choose $a^{-1}$, arriving at $a^{n-1}x$. Following the argument as in the previous case, Player 1 will win regardless of Player 2's next move. 
        
        \item[(1C)] Lastly, suppose Player 2 chooses $x^{-1}$, taking us from $a^{n-2}$ to $a^{2n-2}x$. Then Player 1 will play $a^{-1}$, taking us to $a^{2n-1}x$. 
        If Player 2 follows with either $a^{-1}$ or $x^{-1}$, then Player 1 wins at $e$. Otherwise, if Player 2 chooses $x$, then Player 1 wins at the previously visited $a^{n-2}$.
    \end{enumerate}

Hence, we see from the above that Player 2 cannot choose $a$ on each turn and win the game (despite naively believing that might be the case, considering the even order of $a$ in $G$).

From here we know that Player 2 must choose $x$ or $x^{-1}$ before hitting this fulcrum point of $a^{n-2}$. 
Suppose $a^{\ell}$ is the position at which Player 2 chooses to play $x$ or $x^{-1}$, with $1 \leq \ell \leq n-4$ (see \cref{fig: for the long proof} for a picture). Note that $\ell$ is necessarily odd. We examine two cases. 

\begin{enumerate}
    \item[(2A)] 
     
    Suppose Player 2 chooses $x^{-1}$, taking us to $a^{n+\ell}x$. Player 1's strategy is to play $a^{-1}$ until Player 2 chooses a $x$ or $x^{-1}$ again, or until Player 2 moves to $x$, resulting in a Player 1 win at $e$. Note that, since $\ell$ is odd, $n+\ell$ is even, so Player 2 will be the player to arrive at $x$ should both players only choose $a^{-1}$.

    \item[(2B)] Now suppose Player 2 chooses $x$, taking us to $a^{\ell}x$. Player 1 then chooses $a^{-1}$. 
    We now show that Player 2 must choose a $x$ or $x^{-1}$ before arriving at the fulcrum point $a^{n-1}x$. Indeed, suppose both players play $a^{-1}$ until Player 2 moves to $a^{n-2}x$. Then Player 1 chooses $a^{-1}$, arriving at $a^{n-1}x$.

    From this point, Player 2 loses if they choose $a^{-1}$ to move to $a^nx$ since Player 1 can move to $e$. Player 2 also loses at $e$ if they choose $x$ to move to $a^{2n-1}$. Thus Player 2 is forced to choose $x^{-1}$ to move to $a^{n-1}$. Player 1 follows by choosing $a$ to move to $a^n$. From here Player 2 is forced to choose $a$ as well to move to $a^{n+1}$ since $x$ and $x^{-1}$ both allow Player 1 to win at $e$ the following turn.
    
    We now observe that Player 2 loses if both players continue to play $a$ since Player 2 would arrive at $a^{n+\ell}$ where Player 1 has a winning strategy by choosing $x^{-1}$ to move to $a^{\ell}$. Therefore Player 2 must choose $x$ or $x^{-1}$ prior to hitting $a^{n+\ell}$. That is, Player 2 will choose $x$ or $x^{-1}$ from $a^{n+k}$ where $2\leq k\leq \ell-1$. If Player 2 chooses $x^{-1}$, then they move from $a^{n+k}$ to $a^kx$. From there Player 1 chooses $x^{-1}$ to win at $a^k$ by our assumption on $k$. If Player 2 chooses $x$, they arrive at $a^{n+k}x$, and Player 1 chooses $x$ to win at $a^k$.

\end{enumerate}
  In either case, Player 2 must do a $x$ or $x^{-1}$ again before reaching the respective fulcra $x$ and $a^{n-1}x$.

We now examine Player 2 playing a $x$ or $x^{-1}$ from the cases (2A) and (2B) above and notice that, except for certain extremal cases, we return to a previous case. We show that in the extremal cases, Player 1 has a winning strategy. 

\begin{enumerate}
    \item[(3A)] To avoid reaching the fulcrum points $a^{n-2}$ and $a^{n-1}x$, from which Player 1 has winning strategies as described above, suppose Player 2 chooses $x$ to move from $a^{\ell}$ to $a^{\ell}x$ and then later chooses $x$ to move from $a^kx$ to $a^{n+k}$ where $\ell+1\leq k\leq n-3$ and $k$ is even. Player 1 will choose to play $a$. Since $n+k$ is odd, if both players play $a$, then Player 2 will arrive at $a^{2n-1}$, losing the game at $e$. Thus Player 2 must play $x$ or $x^{-1}$ before this point. That is, Player 2 will choose $x$ or $x^{-1}$ from $a^{n+m}$ where $m$ is odd and $k+1\leq m\leq n-2$.
    
    If Player 2 plays a $x$, they move to $a^{n+m}x$, which places us in the situation of case of (2A), but with $m>\ell$.
    
    If Player 2 plays a $x^{-1}$, they move to $a^mx$. This is the situation of the case (2B) above, but with $m>\ell$.
    
    \item[(3B)] To avoid reaching the fulcrum points $a^{n-2}$ and $a^{n-1}x$, suppose instead that Player 2 chooses $x^{-1}$ from $a^kx$ to arrive at $a^k$ where $\ell+1\leq k\leq n-3$ and $k$ is even. If $k= \ell + 1$, then Player 1 wins at $a^{\ell}$. Otherwise, since $k$ is even, we are back in the same situation as before cases (2A) and (2B) - namely, Player 2 must choose a $x$ or $x^{-1}$ before arriving at the word $a^{n-2}$. Note that if $k=n-3$, then Player 1 has a winning strategy as described in (1A), (1B), and (1C).

    \item[(3C)] To avoid reaching the fulcrum points $a^{n-2}$ and $x$, suppose Player 2 moves from $a^{\ell}$ to $a^{\ell}x^{-1} = a^{n+\ell}x$, where $\ell$ is odd, and then later
    Player 2 chooses $x$ from $a^{n+k}x$ to $a^k$ for $\ell+1\leq k\leq n-1$, where $k$ is even. 
    
    If $k<n-1$, then we are in the same situation as before cases (2A) and (2B); that is, where Player 2 loses if both players only choose $a$ and get to $a^{n-2}$. Note Player 1 will win in the case of $k=\ell + 1$ by playing an $a^{-1}$ their next turn.
    
    If $k=n-1$, then this is not subsumed by the above case. In this case, Player 1 moves to $a^n$ by choosing $a$. If both players continue playing $a$, then Player 2 loses when they arrive at $a^{n+\ell}$ since Player 1 can choose $x^{-1}$ to move to $a^{\ell}x$. Thus Player 2 must choose $x$ or $x^{-1}$ from $a^{n+m}$ for some $0\leq m\leq \ell-1$, where $m$ is even.
    
    If Player 2 chooses $x$, they arrive at $a^{n+m}x$. Then Player 1 will choose $x$ to move to $a^m$ and win since $m \leq \ell - 1$. If Player 2 chooses $x^{-1}$, they arrive at $a^mx$; then Player 1 will choose $x^{-1}$ to move to $a^m$ and win.
    
    \item[(3D)] To avoid reaching the fulcrum points $a^{n-2}$ and $x$, suppose Player 2 chooses $x^{-1}$ from $a^{\ell}$ to $a^{n+\ell}x$ and then later chooses $x^{-1}$ from $a^{n+k}x$ to arrive at $a^{n+k}$ where $\ell+1\leq k\leq n-1$ and $k$ is even. 
    
    Firstly, for the extremal case of $k=n-1$, Player 1 will win at $e$ by playing $a$. Now suppose $ \ell + 1 \leq k < n-1$. Note that $n+k$ is odd, so Player 2 loses at $e$ if both players play only $a$. Therefore, Player 2 must choose $x$ or $x^{-1}$ from $a^{n+s}$ for some $k+1\leq s\leq n-2$, where $s$ is odd. 
    
    If Player 2 chooses $x$, they arrive at $a^{n+s}x$. This is the same situation as case (2A), 
    but with $s>\ell$. 
    
    If Player 2 chooses $x^{-1}$, they arrive at $a^sx$, and this is the same situation as case (2B), but with $s > \ell$.
\end{enumerate}

As demonstrated in cases (3A-D), Player 1 will necessarily win the game, or cycle back into a previous case. As $\dic_n$ is finite, the game will eventually end, resulting in a Player 1 win.

\end{proof}

\tikzstyle{vert} = [circle, draw, inner sep=0pt, minimum size=2.5mm, fill=white]
\begin{figure}[hbt]
    \centering
    \begin{tikzpicture}[scale=0.85]
    
    \draw (0,0) circle (2);
    \draw (0,0) circle (4);
    
         \node (e) at (0, 4) [vert, fill, black] {};
         \node at (0, 4.4) {$e$};
         \node (x) at (0,2) [vert, fill, black] {};
         \node at (0, 2.4) {$x$};
         \node (an) at (0, -4) [vert] {};
         \node at (0, -4.4) {$a^n$};
         \node (anx) at (0, -2) [vert] {};
         \node at (-0.2, -2.4) {$a^n x$};
         
         \node (al) at (3.4,2.2) [vert] {}; 
         \node at (3.8,2.6) {$a^{\ell}$};
         \node (alx) at (1.7, 1.1) [vert] {};
         \node at (2.1, 1.5)  {$a^{\ell}x$};

        \node (an-1) at (1.77, -3.59) [vert] {};
        \node at (2.17, -3.99) {$a^{n-1}$};
        \node (an-1x) at (0.88, -1.79) [vert, fill, black] {};
        \node at (1.28, -2.2) {$a^{n-1}x$};
        
        \node (an-2) at (2.83, -2.82) [vert, black, fill] {};
        \node at (3.23, -3.22) {$a^{n-2}$};
        \node (an-2x) at (1.42, -1.41) [vert] {};
        \node at (2.2, -1.5) {$a^{n-2}x$};
        
        \node (an+l) at (-3.3, -2.2) [vert] {};
        \node at (-3.9, -2.4) {$a^{n+\ell}$};
        \node (an+lx) at (-1.7, -1.1) [vert] {};
        \node at (-2.1, -1.5) {$a^{n+\ell}x$};
        
        \node (a2n-1x) at (-0.88, 1.79) [vert] {};
        \node at (-1.5, 2.3) {$a^{2n-1}x$};

    \end{tikzpicture}
    \caption{A simplified way to visualize the Cayley graph of $\dic_n$, which is comprised of two $2n$-gons, represented here by concentric circles. The generator $a$ moves one clockwise on the outer circle, but counter-clockwise on the inner circle. The generator $x$ moves one from the inner to outer circle or vice versa. The black-filled vertices represent the fulcrum points as defined in \cref{thm: REL for odd dicyclic}.
    }
    \label{fig: for the long proof}
\end{figure}
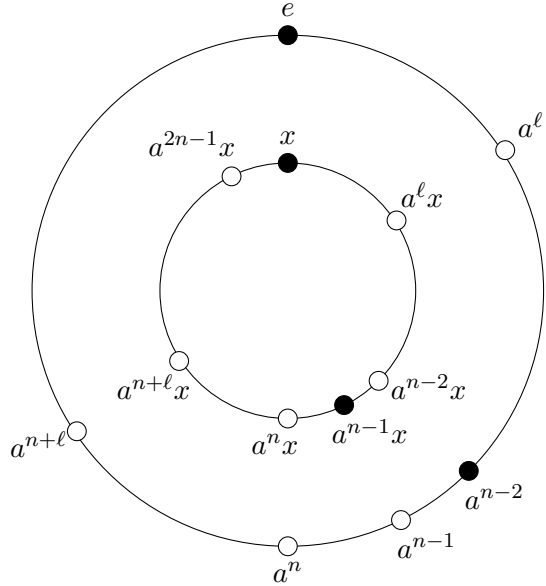

\begin{theorem} \label{thm: REL for dic even}
Player 2 has a winning strategy $\REL(\dic_n, \{a,x\})$ when $n$ is even.
\end{theorem}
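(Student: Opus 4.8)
The plan is to produce a winning strategy for Player~2 that mirrors, as closely as possible, the "parity following" strategy that Player~1 used in the odd case of \cref{thm: REL for odd dicyclic}, but now exploiting the fact that when $n$ is even the element $a$ has order $2n \equiv 0 \bmod 4$, so the cycle parities line up differently. Recall that in $\REL(\dic_n,\{a,x\})$ Player~1 must open with a move along one of the $a$-edges or one of the $x$-edges. I would split into two top-level cases according to Player~1's first move, since by the symmetry $a \leftrightarrow a^{-1}$ we may assume Player~1 opens with either $a$ or with $x$ (a move to $ax$, say; the move to $a^{2n-1}x$ is symmetric).

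First I would handle the case where Player~1 opens with $a$. Here Player~2 adopts the strategy of always choosing $a$ until forced off it (i.e.\ always choosing $a$, or echoing Player~1 onto the "$a^ks \mapsto a^{k+1}s$'' track once an $x$ has been played, exactly the role-reversed version of the odd-case analysis). Because $2n$ is even, if both players only ever choose $a$, it is now Player~1 who is forced to return to $e$ and hence \emph{lose}; so Player~1 must play $x$ or $x^{-1}$ at some point. The key observation is that when Player~1 plays an $x$ from a vertex $a^k$ (with $k$ odd, since Player~1 occupies the odd-indexed outer vertices under this racing pattern), \cref{rem: $s$ guarantees next 2 moves}'s dicyclic analogue forces the next two moves, and one checks (using the normal-form relations $a^k x a^\ell x = a^{k-\ell+n}$ etc.) that Player~2 can always return the game to a position with the same parity structure but with one fewer available $x$-edge on that square; iterating, Player~1 eventually runs out of escapes and is forced back onto a vertex already visited, so Player~2 wins on the subsequent move. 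This is essentially the dual of cases (2A)--(3D) of \cref{thm: REL for odd dicyclic} with the roles of the two players exchanged, so I would either reproduce that bookkeeping with the parity shifted by one, or — cleaner — invoke \cref{thm: iso cayley graphs} style reasoning to note the position after Player~1's opening $a$ is, for Player~2, the same decision tree Player~1 faced in the odd case starting from $a$ on turn~1, now with $2n$ even tilting every parity the other way.

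The remaining case is Player~1 opening with $x$ (to $ax$, via $x^{-1}$; or to $a^{2n-1}x$, via $x$ — handled symmetrically). Here Player~2's reply should bring the game onto the outer cycle: e.g.\ respond with $x$ (so from $ax$ one reaches $ax^2 \equiv_G a^{1+n}$ using $a^k x^2 = a^{k+n}$), landing on an outer vertex $a^{n+1}$ with the "wrong'' parity for Player~1, after which Player~2 falls back on the always-$a$ strategy of the first case; since $n$ is even, $n+1$ is odd and the parity works out so that Player~1 is again the one who would close the outer $2n$-cycle. One must also check the exhaustion/forced-move subtleties: that Player~2's prescribed reply is always legal (never a backtrack), and that no premature short cycle through an $x$-square is available to Player~1 that Player~2 hasn't already blocked — this is where the dicyclic relations $x^2 = a^n$ (giving the "diameter'' chords) need care, since a short relator $a^k x \cdot x = a^{k+n}$ could let Player~1 cycle back fast.

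I expect the main obstacle to be the $x$-opening case and, within it, controlling the short relators coming from $x^2 = a^n$: unlike the dihedral square picture where all relators are $4$-cycles, $\dic_n$'s Cayley graph has the $4$-cycles $a^k, a^kx, a^{k+n}, a^{k+n}x$ wrapping halfway around, so "entering a square'' has a global effect and the clean \cref{lem: 3 forward 1 over}-type lemma is not directly available. The cleanest route is probably to prove a dicyclic analogue of \cref{rem: no third edge} and \cref{rem: $s$ guarantees next 2 moves} first (observing that once two edges of such a $4$-cycle are used, touching a third hands the win to the opponent, so after the first $x$-move play is again confined to a single rotational direction), and then the parity argument goes through essentially as in the odd case with every parity flipped; the bulk of the writeup is the case analysis (2A)--(3D) transcribed with Players~1 and~2 swapped and $n$ even.
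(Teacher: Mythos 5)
Your plan — to transcribe the fulcrum case analysis (2A)--(3D) of \cref{thm: REL for odd dicyclic} with the players' roles swapped and ``every parity flipped'' — is only promised, not carried out, and that transcription is precisely what would need to be proved. The odd-case bookkeeping uses the oddness of $n$ essentially (for instance, in case (2A) it is the fact that $\ell$ odd makes $n+\ell$ even that forces Player 2 to arrive at the fulcrum $x$; with $n$ even this flips, the fulcrum locations move, and it is not at all automatic that the tree still closes in Player 2's favor), and you yourself concede that no analogue of \cref{lem: 3 forward 1 over} is available because the $4$-cycles $a^k,a^kx,a^{k+n},a^{k+n}x$ wrap halfway around the graph. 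Likewise, invoking \cref{thm: iso cayley graphs}-style reasoning to claim that Player 2, after Player 1's opening $a$, faces ``the same decision tree'' Player 1 faced in the odd case is not valid: the group (hence the Cayley graph) is different and the starting position is different, so nothing transfers formally. The paper's actual proof sidesteps all of this: Player 2 simply mirrors Player 1's generator every turn (always legal, since neither $a$ nor $x$ is its own inverse); because $x^2=(x^{-1})^2=a^n$ and $n$ is even, after each Player 2 move the word is equivalent to $a^k$ with $k$ even while Player 1 only ever reaches $a^k$ with $k$ odd or $a^{\ell}x$ with $\ell$ even, and a short contradiction argument shows that any first repetition achieved by Player 1 would force Player 2 to have completed a relator one move earlier.

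There are also concrete errors in what you do write down. In $\REL$ the first player to return to a previously visited word \emph{wins}, yet you assert that ``Player 1 is forced to return to $e$ and hence lose'' and later that Player 1 is ``forced back onto a vertex already visited, so Player 2 wins on the subsequent move'' — both inverted; moreover, in a pure-$a$ race it is Player 2 who returns to $e$ on turn $2n$ and wins, which is the actual reason Player 1 must eventually play $x^{\pm1}$. In the $x$-opening case the computations are off: from $e$ the moves $x$ and $x^{-1}$ land at $x=a^0x$ and $a^nx$ respectively (using $x^{-1}=a^nx$), not at $ax$ and $a^{2n-1}x$, and the mirrored reply then lands at $x^{\pm2}\equiv a^n$, not at $a^{n+1}$, so the parity claim built on ``$n+1$ is odd'' has no basis. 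As it stands the proposal is a strategy sketch whose central verification is missing and whose supporting computations and win/lose conventions would need to be corrected before it could be completed.
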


\begin{proof}
Player 2 has a winning strategy via mirroring Player 1. That is, if Player 1 chooses a generator $s$ on their turn, Player 2 will follow with $s$ on their turn. Since $x^2 = (x^{-1})^2 = a^n$ and $n$ is even, we note that this strategy implies Player 1 only lands at words equivalent to $a^k$ where $k$ is odd or $a^{\ell}x$ where $\ell$ is even. Meanwhile, Player 2 will only land at words equivalent to $a^k$ where $k$ is even.

To show that Player 2 has a winning strategy, we assume for contradiction that Player 1 has a winning strategy. There are two cases: Player 1 can either win at $a^k$ for $k$ odd or $a^{\ell}x$ for $\ell$ even as stated above. 

First suppose that Player 1 wins at a word equivalent to $a^k$, where $k$ is odd. This means that they arrive at a word equivalent to $a^k$ for the second time in the game.
Since Player 2 only lands at words equivalent to $a^{\ell}$ where $\ell$ is even, Player 1 could only have arrived at a word equivalent to $a^k$ from $a^{k-1}$ or $a^{k+1}$. Without loss of generality, suppose it was from $a^{k-1}$. Then Player 2 moved to $a^{k+1}$ on the subsequent turn. Thus, the second time Player 1 arrives at a word equivalent to $a^k$, it must be from a previously visited word, implying that Player 2 completed a relator on the previous turn, a contradiction.

Now suppose that Player 1 wins at a word equivalent to $a^{\ell}x$, where $\ell$ is even. The first time that Player 1 arrived at $a^{\ell}x$ must have been from $a^{\ell}$ or $a^{n+\ell}$. Player 2 then would have moved to the other. Thus, upon reaching $a^{\ell}x$ for the second time, Player 1 would have again moved from $a^{\ell}$ or $a^{n+\ell}$, both of which would have been visited for a second time. Hence Player 2 won the previous turn, a contradiction.
\end{proof}

\begin{remark} \label{rem: triangle relators}
For $\dic_n$ with generating set $\{a,b,c\}$ there exist the following relators of length three:
\begin{multicols}{4}
\begin{itemize}
    \item[] $abc^{-1}$
\item[] $ba^{-1}c^{-1}$
\item[] $cb^{-1}a^{-1}$
\item[] $a^{-1}cb^{-1}$
\item[] $b^{-1}a^{-1}c$
\item[] $c^{-1}ab$
\item[] $ab^{-1}c$
\item[] $bc^{-1}a$
\item[] $cab^{-1}$
\item[] $a^{-1}c^{-1}b$
\item[] $b^{-1}ca$
\item[] $c^{-1}ba^{-1}$
\end{itemize}
\end{multicols}

The above relators form triangles in the Cayley graph of $\dic_n$. Henceforth, when referencing these relators, we shall call them \textit{triangle relators}.
\end{remark}

Despite the addition of the third generator, we can see that Player 2 has the same winning strategy for $\REL(\dic_n, \{a,b,c\})$ as they did for $\REL(\dic_n, \{a,x\})$ when $n$ is even. 

\begin{theorem}
Player 2 has a winning strategy $\REL(\dic_n, \{a,b,c\})$ for $n$ even.
\end{theorem}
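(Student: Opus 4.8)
The plan is to show that Player~2's mirroring strategy from the proof of \cref{thm: REL for dic even} still works once the third generator $c=ab$ is adjoined, needing only the minor additions forced by $c$. Player~2's strategy is: on every turn, repeat the generator Player~1 just played. This is always legal, since none of $a,b,c$ has order $2$ in $\dic_n$ (we have $a^2\ne e$ and $b^2=c^2=a^n\ne e$), so $s\ne s^{-1}$ for each generator $s$ and copying never backtracks.

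First I would redo the parity bookkeeping in the normal form $a^ib^j$, using that $n$ is even. A round consisting of Player~1's move $s$ followed by Player~2's copy multiplies the word on the right by $s^2\in\{a^{2},a^{-2},a^{n}\}$, each an even power of $a$; hence after each of Player~2's moves the word is an even power of $a$, and Player~1 never stands on an even power of $a$. Computing $a^{2m}\cdot s$ for $s\in S\cup S^{-1}$ shows Player~1's positions fall into exactly three types: $a^{\mathrm{odd}}$ (via $a^{\pm1}$), $a^{\mathrm{even}}b$ (via $b^{\pm1}$), and $a^{\mathrm{odd}}b$ (via $c^{\pm1}$); the first two already occur in the two-generator case and the third is the genuine contribution of $c$. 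I would also note, as an aside, that by this same parity neither player can close one of the triangle relators of \cref{rem: triangle relators} under the mirroring strategy, so those never intervene.

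The key point is a combinatorial claim: each vertex $v$ that Player~1 can occupy has exactly two neighbours in the Cayley graph that are even powers of $a$, say $u$ with $u\cdot g_1=v$ and $u'$ with $u'\cdot g_2=v$, and moreover the copy move out of $v$ by $g_1$ lands on $u'$ and by $g_2$ lands on $u$; in particular $g_2=g_1^{-1}$. For $v=a^k$ with $k$ odd these neighbours are $a^{k-1},a^{k+1}$, reached by $a,a^{-1}$, and for $v=a^{\ell}b$ with $\ell$ even they are $a^{\ell},a^{\ell+n}$, reached by $b,b^{-1}$ — exactly as in \cref{thm: REL for dic even}. The one new case is $v=a^{\ell}b$ with $\ell$ odd, where the two such neighbours turn out to be $a^{\ell-1},a^{\ell+n-1}$, reached by $c,c^{-1}$; I would verify this from $c=ab$, $b^2=a^n$ and $b^{-1}a^{-1}=ab^{-1}$, which give $bc=a^{n-1}$ and $bc^{-1}=a^{-1}$. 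Checking this claim for all three vertex types, together with the ``swap'' property in each, is the main piece of work and the only place care is needed; everything else is formal.

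Given the claim, the contradiction argument of \cref{thm: REL for dic even} transfers unchanged. Suppose Player~1 were the first to form a relator, at time $t$, so $v_t=v_j$ for some $j<t$ with $v_0,\dots,v_{t-1}$ pairwise distinct. Since $v_t$ is a Player~1 position it is not an even power of $a$, so $j$ is odd and $v_j$ is a Player~1 position too; then $v_{t-1}$ and $v_{j-1}$ are both even-power-of-$a$ neighbours of $v_t=v_j$, hence, by distinctness, the two distinct such neighbours $u,u'$ — say Player~1 reached $v_j$ from $u$ by $g_1$ and $v_t$ from $u'$ by $g_2=g_1^{-1}$. By the swap property, Player~2's copy move at time $j+1$ sends $v_j$ to $u'=v_{t-1}$, so $v_{j+1}=v_{t-1}$; distinctness forces $j+1=t-1$, i.e.\ $t=j+2$. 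But then Player~1's move at time $t$ goes from $v_{t-1}$ to $v_t=v_j$ by $g_2=g_1^{-1}$, exactly undoing Player~2's move at time $j+1$ (played by $g_1$) — an illegal backtrack. Hence Player~1 is never first to form a relator; since the Cayley graph is finite a relator must be formed within $4n$ moves, so Player~2 forms it and wins.
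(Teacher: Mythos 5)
Your proposal is correct and follows essentially the same route as the paper: Player 2 mirrors Player 1's generator, the parity count in the normal form $a^ib^j$ shows Player 2 occupies only even powers of $a$ while Player 1's positions are $a^{\mathrm{odd}}$, $a^{\mathrm{even}}b$, or (the new case from $c$) $a^{\mathrm{odd}}b$, and the two-even-neighbour/swap analysis is exactly the contradiction argument of \cref{thm: REL for dic even} extended to the third generator. You have simply written out the details that the paper explicitly leaves to the reader, and your verification of the $a^{\mathrm{odd}}b$ case via $bc=a^{n-1}$, $bc^{-1}=a^{-1}$ is accurate.
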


The same argument as described in \cref{thm: REL for dic even} applies here. Recall we can describe every element of $\dic_n$ via the normal form $a^ib^j$ where $0 \leq i < 2n$ and $j \in {0,1}$. From \cref{thm: REL for dic even}, we know that Player 1 can arrive at words equivalent to $a^{\ell}b$ for $\ell$ even and $a^k$ for $k$ odd. However, with the addition of the generator $c$, Player 1 can also arrive at words equivalent to $a^{\ell}b$ for $\ell$ odd. Player 2 still can only arrive at words equivalent to $a^k$ where $k$ is even. We leave the remaining details to the reader.

As opposed to the game $\REL(\dic_n, \{a,x\})$,
Player 2 has a winning strategy for all $n \geq 2$. Note that one can still use \cref{fig: for the long proof} as a visual aid for \cref{thm: rel odd three generators}  by replacing $x$ with $b$ and using that $c=ab$.

\begin{theorem} \label{thm: rel odd three generators}
Player 2 has a winning strategy for $\REL(\dic_n, \{a,b,c\})$ when $n$ is odd.
\end{theorem}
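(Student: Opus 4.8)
The plan is to give Player~2 an explicit strategy modeled on the mirroring strategy of \cref{thm: REL for dic even}, but modified to exploit the third generator. Write each element in the normal form $a^i b^j$ with $0 \le i < 2n$, $j\in\{0,1\}$, and recall the consequences of the relations $c = ab$, $b^{-1} = a^n b$, and $c^{-1} = a^{n+1}b$. The naive mirroring (Player~2 copies Player~1's last generator) fails here because $b^2 = c^2 = a^n$ is an \emph{odd} power of $a$, so Player~2 could no longer stay on even powers of $a$. Instead, Player~2 uses the response table: reply to $a^{\pm 1}$ with $a^{\pm 1}$; reply to $b$ with $c$; reply to $b^{-1}$ with $c^{-1}$; reply to $c$ with $b$; reply to $c^{-1}$ with $b^{-1}$. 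A direct computation with the relations shows that in each case the net change in the $a$-exponent over one full round (Player~1's move, then Player~2's reply) is even — it is $\pm 2$ or $n\pm 1$, and $n\pm1$ is even since $n$ is odd — and that each reply is a single generator different from the inverse of Player~1's preceding move, hence always legal. Therefore, after each of Player~2's turns the word is equivalent to an even power of $a$, while after each of Player~1's turns it is equivalent to an odd power of $a$ or to $a^i b$ for some $i$; in particular Player~1 never occupies a vertex that is an even power of $a$.

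The second ingredient is a mirror property. Using $n$ odd, one checks that each vertex $v$ of Player~1's type (an odd power of $a$, or a word of the form $a^i b$) has exactly two neighbors that are even powers of $a$, each joined to $v$ by a unique generator. A short case analysis over the six possibilities then shows: whenever Player~1 moves to such a $v$ for the first time — which must be from one of these two even-power neighbors — Player~2's prescribed reply moves from $v$ to the \emph{other} even-power neighbor of $v$. Moreover the generator Player~2 uses to do this is exactly the inverse of the generator along which Player~1 would have to travel to return from that neighbor to $v$.

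Now suppose, for contradiction, that Player~1 forms the first relator, on some turn $2t-1$, arriving at a vertex $v$ already visited on an earlier turn $j$; a short check rules out relators on turns $1$ and $2$, so $t\ge 2$. The words $w_0, w_2, w_4,\dots$ are all even powers of $a$ while $v\equiv w_{2t-1}$ is not, so $j$ is odd, say $j = 2s-1$ with $s<t$. Hence Player~1 is at $v$ on turns $2s-1$ and $2t-1$, each time arriving from a neighbor $w_{2s-2}$, respectively $w_{2t-2}$, that is an even power of $a$. By the mirror property, $w_{2s}$ is the other even-power neighbor of $v$, so both even-power neighbors of $v$ have been visited by turn $2s$, and $w_{2t-2}$ equals one of them. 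If $w_{2t-2} = w_{2s-2}$, then the word $w_{2t-2}$ repeats an earlier word, so a relator was already formed on turn $2t-2 < 2t-1$, a contradiction. Otherwise $w_{2t-2} = w_{2s}$; if $2t-2 > 2s$ this again gives a relator no later than turn $2t-2$, and if $2t-2 = 2s$ then Player~1's move on turn $2t-1$ must be the inverse of Player~2's move on turn $2t-2$, a forbidden backtrack. Each case is a contradiction, so Player~1 cannot win; since the game on $\dic_n$ with $n\ge 2$ must end with some player forming a relator, Player~2 wins $\REL(\dic_n,\{a,b,c\})$.

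The main obstacle is the verification of the mirror property: one must check in each of the (essentially three) shapes of $v$ — an odd power of $a$, $a^i b$ with $i$ even, and $a^i b$ with $i$ odd — that the two even-power neighbors are precisely as claimed, that the designated reply lands on the correct one, and that the edge orientations line up so that the $2t-2 = 2s$ case is genuinely a forbidden backtrack. All of this is elementary arithmetic in $\dic_n$, but it must be carried out carefully, remembering that exponents are read modulo $2n$ and that the reply to a $b^{\pm 1}$- or $c^{\pm 1}$-move is the unique choice that keeps the word on an even power of $a$.
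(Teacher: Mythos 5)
Your proof is correct, but it takes a genuinely different route from the paper's. The paper's argument is a long interactive case analysis: Player 2 mirrors Player 1's $b^{\pm 1}$ and $c^{\pm 1}$ moves with the \emph{same} generator (exploiting $b^2=c^2=a^n$), and the proof then tracks forced sequences of $a^{\pm1}$-moves, ``last possible moment'' deviations, and parity of positions through many nested cases, in the spirit of the analysis of \cref{thm: REL for odd dicyclic}. You instead modify the pairing: reply to $a^{\pm1}$ in kind and swap $b^{\pm1}\leftrightarrow c^{\pm1}$, so that each full round changes the $a$-exponent by $\pm2$ or $n\pm1$, all even because $n$ is odd. This yields the invariant that Player 2 always sits on even powers of $a$ while Player 1 never does, and your ``mirror property'' (each Player-1 vertex has exactly two even-power neighbors, each along a unique generator, and Player 2's reply always occupies the other one) gives a clean structural contradiction if Player 1 were first to close a cycle — essentially the same pairing-of-neighbors mechanism the paper uses for \cref{thm: RAV any group with order two}, \cref{thm: RAV for Dic with two generators}, and the even case \cref{thm: REL for dic even}, now transported to odd $n$ via the $b\leftrightarrow c$ swap. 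I checked the arithmetic: $bc\equiv a^{n-1}$, $b^{-1}c^{-1}\equiv a^{n-1}$, $cb\equiv a^{n+1}$, $c^{-1}b^{-1}\equiv a^{n+1}$, the two even-power neighbors of $a^j$ ($j$ odd) and of $a^ib$ are distinct and uniquely reached, the reply is never the forbidden inverse letter, and the endgame trichotomy ($w_{2t-2}=w_{2s-2}$, $w_{2t-2}=w_{2s}$ with $2t-2>2s$, or the backtracking case $2t-2=2s$) is exhaustive; finiteness of $\dic_n$ then forces Player 2 to be the one who completes the first relator. What your approach buys is a short, uniform, and easily verifiable proof that also explains \emph{why} Player 2 wins (Player 1's available ``escape'' vertices are consumed in pairs); what the paper's approach buys is an explicit description of actual play and of the critical positions, which is closer in style to its treatment of the two-generator case and more informative about game length and tempo.
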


\begin{proof}
We will show first that Player 1 cannot win if they begin the game with a $b$ or $c$ (and via symmetry, neither with $b^{-1}$ nor $c^{-1}$). 

Without loss of generality \footnote{for the case where Player 1 chooses $c$, one simply makes the following changes: $b$ changes to $c$, $c$ to $b$, and $a^{\pm 1}$ to $a^{\mp 1}$.}, suppose Player 1 chooses $b$ on their first turn. Player 2's strategy here is to mirror any play of $b^{\pm 1}$ or $c^{\pm 1}$ with the same generator, since the defining relations give us that $b^2=c^2=a^n$. Hence, after one turn each, the game word is equivalent to $a^n$.
 
 Because of the relator $b^4$, Player 1 cannot play another $b$. Similarly, since $c^2=(c^{-1})^2=a^n$, Player 1 cannot play $c$ or $c^{-1}$, either. Therefore, a choice of either $a$ or $a^{-1}$ is forced for Player 1. However, by the triangle relator $ba^{-1}c^{-1}$, Player 1 cannot choose $a^{-1}$, so they must choose $a$. 
 
 Note that if both players continue to choose $a$ each turn, then Player 1 moves from positions $a^r$ to $a^{r+1}$, where $r$ is odd, and Player 2 the opposite. Hence, Player 1 would eventually win at $e$ unless Player 2 chooses a generator other than $a$. By the triangle relators of \cref{rem: triangle relators}, Player 2 is limited to $c$ or $c^{-1}$. We will show that, should Player 2 wait as long as possible and play $c^{-1}$, then they will win the game, thus forcing Player 1 to act first.
 
 Suppose Player 2 plays $c^{-1}$ at the last available opportunity (assuming up until this point, both players only have chosen $a$). This means Player 2 moves from $a^{2n-2}$ to $a^{n-1}b$. An examination of Player 1's choices of moves shows that Player 1 is forced to mirror Player 2 and choose $c^{-1}$ for their next turn, thus moving from $a^{n-1}b$ to $a^{n-2}$. 
 
 If Player 2 chooses $a^{-1}$, then they will have locked Player 1 into an unwinnable situation. That is, if both players continue choosing $a^{-1}$, then Player 2 would eventually win at a word equivalent to $e$ by parity of $n$. Should Player 1 play a generator different than $a^{-1}$, then Player 2 will mirror that choice, but this will result in a move taking the game to a word equivalent to $a^{n+\ell}$, where $2 \leq \ell \leq n-3$. However, words equivalent to all of these elements have already been visited, so Player 2 would win the game. Therefore, we can indeed conclude that Player 1 must make a move other than $a$ before Player 2 chooses $c^{-1}$ from $a^{2n-2}$.
 
 Now suppose Player 1 plays a $c$ or $c^{-1}$ from a word equivalent to $a^{n+k}$, where $k$ is even and $2\leq k \leq n-3$ (again, by \cref{rem: triangle relators}, Player 1 is limited to choosing from just $c$ or $c^{-1}$). Then Player 2's strategy is again to mirror Player 1. Regardless of Player 1's choice, the game will move to a word equivalent to $a^k$ by this strategy.
 
 We note that whether both players choose $c$ or both players choose $c^{-1}$ on the previous two moves, Player 1 can not choose $a$ by \cref{rem: triangle relators} and cannot choose $b$, $b^{-1}$, $c$, $c^{-1}$ since Player 2 can mirror them, finishing a relator. Thus, Player 1 is forced to choose $a^{-1}$. Note that this moves Player 1 from $a^{k}$ to $a^{k - 1}$, where $k$ is even. Player 2 will choose to play $a^{-1}$ since parity will result in a win at a word equivalent to $e$ should both players continue to choose $a^{-1}$. Thus Player 1 must choose $b$ or $b^{-1}$ from a word equivalent to $a^m$, where $2\leq m\leq k-2$. However, Player 2 will mirror them, landing at a word equivalent to $a^{n+m}$, which has already been visited.
 
Having shown that Player 1 will necessarily lose if they choose $b$ or $c$ on their first turn, we consider the case where Player 1 chooses $a$ first. We first consider if both players repeatedly choose $a$. If this is the case, then Player 2 wins at a word equivalent to $e$ by parity due to the relator $a^{2n}$. Therefore, Player 1 must choose something other than $a$ at some point. If they do so from a word equivalent to $a^{n+k}$, where $1\leq k\leq n-2$, then Player 2 will mirror Player 1 to arrive at $a^k$ and win. Thus Player 1, by \cref{rem: triangle relators}, must choose $c$ or $c^{-1}$ from $a^{\ell}$, where $2\leq \ell\leq n-1$ and $\ell$ is even. If they do so from $a^{n-1}$, this results in a loss since they will arrive at $a^nb$ or $b$, both of which lead to a Player 2 win the following turn. Hence we may assume $2\leq \ell \leq n-3$. 

After Player 1 chooses $c$ or $c^{-1}$, Player 2 will mirror Player 1 and move to a word equivalent to $a^{n+\ell}$. To avoid losing on the following turn, Player 1 is forced to choose $a^{-1}$. We note that Player 1 moves to $a^{r}$, with $r$ even, so Player 1 would win at $a^{\ell}$ if both players continue to choose $a^{-1}$ since $\ell$ is even. Thus Player 2 must choose $b$ or $b^{-1}$ (they cannot choose $c$ or $c^{-1}$ due to \cref{rem: triangle relators}) before Player 1 arrives at $a^{\ell}$. Suppose the game word is equivalent to $a^{\ell +2}$ and Player 2 chooses $b^{-1}$. Then Player 2 will arrive at a word equivalent to $a^{n+\ell+2}b$. By \cref{rem: triangle relators}, Player 1 loses if they choose generators $c, c^{-1},$ or $a^{-1}$. If Player 1 chooses $a$, then they arrive at a word equivalent to $a^{n+\ell+1}b$. But then Player 2 chooses $c^{-1}$ to win at a word equivalent to $a^{n+\ell}$.

Lastly, suppose Player 1 mirrors Player 2 and also chooses $b^{-1}$, arriving at a word equivalent to $a^{n+\ell+2}$. Then Player 2 will choose $a$ afterward. We note that if both players continue to choose $a$, then Player 2 will win at a word equivalent to $e$ due to parity. Thus Player 1 must choose $c$ or $c^{-1}$ from a word equivalent to $a^{n+m}$, where $\ell+3\leq m\leq n-2$ and $m$ is odd. In either case, Player 2 will mirror them and win at a word equivalent to $a^m$. 

Thus, we have shown that Player 1 loses if they play $a^{-1}$ until they arrive at a word equivalent to $a^{\ell+2}$. Hence they must play $b$ or $b^{-1}$ from a word equivalent to $a^s$, where $\ell+3\leq s\leq n+\ell-2$ and $s$ is odd. 
Suppose $s\geq n$. Then Player 2 will mirror Player 1 and arrive at a word equivalent to $a^{n+s}$, which has been previously visited. 
If $s \leq n-1$, then Player 2 will still mirror Player 1 to move to a word equivalent to $a^{n+s}$, which has not yet been visited. Player 1 is then forced to choose $a$ by \cref{rem: triangle relators}. Due to parity, Player 2 will win if both players continue to play $a$, so Player 1 must choose $c$ or $c^{-1}$ from a word equivalent to $a^{n+t}$, where $s+2\leq t\leq n-1$. However, Player 2 will then mirror them and land at a word equivalent to $a^t$, winning the game. 
\end{proof}

\section{$\REL$ for Products of Cyclic Groups}
\label{sec: products of groups}

In this section, we consider the group $\Z_n\times \Z_m$ with the following presentation
$$
\langle a,b \; | \; a^n = b^m = aba^{-1}b^{-1} = e\rangle.
$$

\begin{theorem} \label{theorem: Zn-Zm mod pm1 and 0}
Consider the game $\REL(\Z_n\times \Z_m, \{a,b\})$, where $n\geq m-1$ and $n, m \geq 3$.
\begin{enumerate}
    \item If $n\equiv \pm 1\mod m$, then Player 1 has a winning strategy. 
    \item If $n\equiv 0\mod m$, then Player 2 has a winning strategy.
    \end{enumerate}
\end{theorem}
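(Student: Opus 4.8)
The plan is to work with the undirected Cayley graph $\Gamma(\Z_n\times\Z_m,\{a,b\})$ as the discrete torus: vertices are pairs $(i,j)$ with $i\in\Z_n$, $j\in\Z_m$, with an $a$-edge joining $(i,j)$ to $(i+1,j)$ and a $b$-edge joining $(i,j)$ to $(i,j+1)$, so that $\REL$ here is a race to form a cycle on this torus starting from $e=(0,0)$ with no backtracking. For $n,m\geq 3$ this is a simple $4$-regular graph, so every player always has (at least three) legal moves; the game cannot end for lack of a move, and being finite it must end with \emph{some} player forming a cycle. Thus we only need to decide which player can force the first cycle to be completed on one of their own turns. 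It is convenient to picture the torus as $m$ ``rows'', each an $n$-cycle traversed by $a$-edges, stacked cyclically and joined by vertical $b$-edges; the hypothesis $n\geq m-1$ says the rows are at least (almost) as long as the number of rows, which is exactly what makes the winding arguments below close up. Note also that by \cref{thm: iso cayley graphs} only the isomorphism type of this torus matters, so the two coordinate labellings may be used interchangeably.

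For part (1) I would give Player 1 an explicit ``spiralling'' strategy, treating $n\equiv 1\pmod{m}$ and $n\equiv -1\pmod{m}$ by mirror-image arguments (the spiral winding one way or the other). The key elementary point is that $n\equiv\pm 1\pmod{m}$ forces at least one of $n,m$ to be odd, so the torus carries a cycle through $e$ of odd length --- the horizontal cycle $a^{\pm n}$ when $n$ is odd, or the vertical cycle $b^{\pm m}$ when $m$ is odd. Player 1's plan is to drive the play around that cycle (keep choosing $a$ when $n$ is odd, or $b$ when $m$ is odd), which wins outright if Player 2 never leaves the current row; the substance of the proof is to show that whenever Player 2 steps to a neighbouring row, Player 1 can re-start the spiral in the new row so that the eventual cycle-closing move still falls to Player 1. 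As in \cref{lem: 3 forward 1 over} for the dihedral case, I would isolate this as a navigation lemma of the form ``if a player enters row $j$ at a vertex in a prescribed residue class, they can guarantee entering a later row at a vertex in the matching class'', where the shift of the prescribed vertex produced by winding once around a row is governed precisely by $n\bmod m$; iterating it, together with $n\equiv\pm1\pmod{m}$, puts Player 1 in a position from which every legal reply of Player 2 hands Player 1 a cycle-closing move, exactly as at the end of the proof of \cref{thm: 2 player dihedral REL}. The bookkeeping for Player 2's deviations --- stepping up versus down a row, and the extremal configurations that occur when the spiral first wraps all the way around the torus --- is where the real work is, and is the main obstacle.

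For part (2), with $m\mid n$, I would produce a pairing-type strategy for Player 2. The crucial structural fact is that $m\mid n$ makes certain residues well defined on the torus that are \emph{not} available when $m\nmid n$ (the image of $\Z_n$ in $\Z_m$, and the induced homomorphisms $\Z_n\times\Z_m\to\Z_m$ carrying each generator to $\pm1$), and the aim is to have Player 2 reply to each move of Player 1 so as to maintain the invariant that, immediately after Player 2's move, the newly occupied vertex has no previously visited neighbour other than the one Player 2 just came from. If this invariant is maintained, Player 1 is never offered a cycle-closing move on any of their turns, so --- the game being finite and ending in a cycle --- it must be Player 2 who forms it. The heart of the argument is to show that such a safe reply always exists, using $m\mid n$, and that Player 1 cannot herd the path so as to destroy it; verifying this, and separately handling the extremal cases permitted by $n\geq m-1$ and $n,m\geq 3$ (for instance $n=m$ or $n=2m-1$) as was done for the dicyclic groups, is the main obstacle for part (2).
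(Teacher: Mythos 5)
What you have written is a strategy plan rather than a proof: in both parts the step that actually decides the game is the one you set aside as ``the main obstacle.'' In part (1) the navigation lemma you invoke (``if a player enters row $j$ at a vertex in a prescribed residue class, they can guarantee entering a later row in the matching class'') is never stated precisely, let alone proved, and it is not a routine transplant of \cref{lem: 3 forward 1 over}: that lemma rests on the ``no third edge of a square'' forcing of \cref{rem: no third edge}, and on the torus the analogous forcing would have to control the commutation $4$-cycles together with the vertices already visited in two adjacent rows, with the parity of whose turn it is shifting each time Player 2 changes rows. Nothing in your outline establishes such a forcing, and the hypothesis $n\equiv\pm1 \mod m$ enters your argument only through the unverified phrase that ``the shift \ldots is governed precisely by $n\bmod m$.'' In part (2) you never define Player 2's reply rule at all: the invariant that the vertex Player 2 just reached has no previously visited neighbour other than the one just left is exactly what has to be shown to be achievable and maintainable, and you acknowledge that this verification is missing. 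So neither part is established by the proposal.

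For comparison, the paper's proof avoids all of this with one explicit rule: the strategist always answers with the generator complementary to the opponent's last choice (Player 1 opening with $a$ when $n\equiv 1\mod m$ and with $b$ when $n\equiv -1\mod m$; Player 2 needing no opening move when $n \equiv 0 \mod m$). After the strategist's $\ell$-th move the game word is then equivalent to $a^{\ell}b^{\ell-1}$ (resp.\ $b^{\ell}a^{\ell-1}$, resp.\ $a^{\ell}b^{\ell}$), contains no occurrence of $a^{-1}$ or $b^{-1}$, and ends in $ab$ or $ba$; hence the opponent can never complete $a^{\pm n}$, $b^{\pm m}$, or a commutation relator, and a short modular computation rules out their completing any relator $a^{s}b^{t}$ with $n\mid s$ and $m\mid t$ before the strategist wins at $a^{n}b^{n-1}\equiv e$, $a^{n}b^{n+1}\equiv e$, or $a^{n}b^{n}\equiv e$ --- which is precisely where the congruences $n\equiv 1,\,-1,\,0 \mod m$ are used, and where the argument ends, much as in \cref{thm: 2 player dihedral REL} only in spirit but with far less case analysis. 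To salvage your outline you would need to state and prove the torus analogue of \cref{lem: 3 forward 1 over} and exhibit an explicit safe-reply rule for part (2); the complementary-generator strategy shows both can be bypassed entirely.
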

\begin{proof}
Let $G\cong \Z_n\times \Z_m$. Note that the Cayley graph for $G$ can be visualized as an $n$-gon of $m$-gons (see \cref{fig: for thm 4.1} for a partial Cayley graph example).

First suppose $n\equiv 1\mod m$, so $n = km+1$ for some $k\geq 1$. We show that Player 1 wins using the fact that $a^n b^{n-1} = e$ since $n-1 = km$. The strategy is as follows: Player 1 begins by choosing $a$ and then chooses $a$ whenever Player 2 chooses $b$ and chooses $b$ whenever Player 2 chooses $a$. Note that, when visualizing the Cayley graph for $G$, Player 1's strategy means that no more than two consecutive moves are made on any $m$-gon. We show that no commutation relators can be formed by Player 2, and thus Player 1 wins by traveling completely around the $n$-gon of $m$-gons.

On their first turn, Player 2 can choose $a$, $b$, or $b^{-1}$; however, we note that the choices of $b$ and $b^{-1}$ are symmetric. Thus we may assume Player 2 chooses $a$ or $b$ on their first turn. Then the word $w_2 \equiv_G a^2$ or $ab$. Now for Player 1's next turn, they will choose $b$ if $w_2\equiv_G a^2$ and will choose $a$ if $w_2\equiv_G ab$, yielding the word $w_3 \equiv_G a^2b$ in either case. Note that $w_3$ ends in $ab$, and there is no occurrence of $a^{-1}$ or $b^{-1}$ in the word.

Now suppose for $1 < \ell < n$, that on Player 1's $\ell$-th turn, their play results in the word $w_{2\ell - 1} \equiv_G a^{\ell}b^{\ell -1}$ that ends in $ab$ or $ba$ and contains no occurrence of $a^{-1}$ or $b^{-1}$.\footnote{Note that while $G$ is abelian, in the \textit{word} $w_{2\ell -1}$ the string $ba$ is not equal to the string $ab$} We show that Player 2 cannot win on their $\ell$-th turn and that Player 1 can move to the word $w_{2\ell + 1} \equiv_G a^{\ell+1}b^{\ell}$.

Since the word $w_{2\ell-1}$ ends in $ab$ or $ba$ and $m,n\geq 3$, the relators $a^n $ and $b^m$ cannot be formed by Player 2. 
As per our hypothesis, the word $w_{2\ell - 1}$ has no occurrence of $a^{-1}$ or $b^{-1}$; thus Player 2 cannot form a commutation relator on this turn, either.

Now, since $\ell<n$, Player 2 cannot complete a relator of the form $a^sb^t$, where $n|s$ and $m|t$ on turn $2\ell$. Indeed, since $w_{2\ell-1} \equiv_G a^{\ell}b^{\ell -1}$, Player 2 must choose $b$ to move to $a^{\ell}b^{\ell}$ or $a$ to move to $a^{\ell+1}b^{\ell - 1}$. In the first case,  $n\nmid \ell$ since $\ell<n$. In the second case, if $\ell+1<n$, then $n\nmid \ell +1$. If $\ell + 1 =n$, then $m\nmid \ell -1$ since $n\equiv 1\mod m$. In either case, the relation $a^sb^t = e$ is not satisfied.

We now see that Player 2 cannot possibly win on their $\ell$-th turn. Since a choice of $a^{-1}$ or $b^{-1}$ will result in Player 1 winning on their next turn by means of the commutation relator, Player 2 must choose $a$ or $b$. Then, Player 1's strategy dictates that they will move to the word $w_{2\ell+1} \equiv_G a^{\ell+1}b^{\ell}$. Moreover, in $w_{2 \ell+1}$ there is no occurrence of $a^{-1}$ or $b^{-1}$ and $w_{2 \ell+1}$ must end in either $ab$ or $ba$.

In particular, Player 1 can guarantee reaching $w_{2(n-1)-1} \equiv_G a^{n-1}b^{n-2}$. No matter the choice of Player 2 at this juncture, Player 1 will win on the subsequent turn using the fact that $a^n b^{n-1} \equiv_G e$ or by using the commutation relator.

Now suppose $n\equiv -1\mod m$, so $n= km-1$ for some $k\geq 1$. In this case, Player 1 begins by choosing $b$ and then uses the same strategy as above. This is a winning strategy by an argument similar to the $n \equiv 1 \mod m$ case.

Now suppose $n\equiv 0\mod m$, say $n=km$. Without loss of generality, Player 1 chooses $a$ or $b$ due to symmetry. Player 2 has a winning strategy by choosing $b$ whenever Player 1 chooses $a$ and by choosing $a$ whenever Player 1 chooses $b$. In this case, Player 2 ensures that their play results in the word $w_{2\ell} \equiv_G a^{\ell}b^{\ell}$, which ends in $ab$ or $ba$ and has no occurence of $a^{-1}$ or $b^{-1}$ for $1\leq \ell <n$. To see that Player 1 cannot win on the next turn for $\ell <n$, note that Player 1 cannot complete the relators $a^n$ or $b^m$ since the word $w_{2\ell-1}$ ends in $ab$ or $ba$ and $m,n\geq 3$. They also cannot complete a commutation relator because there is no occurence of $a^{-1}$ or $b^{-1}$. Therefore they must choose $b$ to achieve a word equivalent to $a^{\ell}b^{\ell+1}$ or choose $a$ to achieve a word equivalent to $a^{\ell+1}b^{\ell}$. In the first case, the word is not equivalent to $e$ since $\ell<n$ so $n\nmid \ell$. In the second case, either $\ell + 1<n$, so $n\nmid \ell +1$, or $\ell + 1 = n$. If $\ell + 1 = n$, $m\mid \ell+1$ since $n\equiv 0 \mod m$ and hence $m\nmid \ell$ since $m\geq 3$.

Because of Player 2's strategy, Player 2 can ensure reaching $w_{2(n-1)} \equiv_G a^{n-1}b^{n-1} \equiv_G a^{-1}b^{-1}$. Player 2 then wins on the subsequent move either at $e$ since $a^n b^n = e$ or by using the commutation relator.

\end{proof}

\begin{figure}[hbt]
    \centering
    \begin{tikzpicture}[scale=0.75]

        \node (a3) at (-3, 0.5) [vert] {};
        \node at (-3, 1.0) {$a^3$};
        \node (a3b2) at (-4.2,-1.4) [vert] {}; 
        \node at (-4.8,-1.8) {$a^3b^2$};
        \node (a3b) at (-1.8,-1.4) [vert] {}; 
        \node at (-1.8,-1.8) {$a^3b$};
        
        \path[black-r] (a3) to (a3b);
        \path[black-r] (a3b) to (a3b2);
        \path[black-r] (a3b2) to (a3);
        
        \node (b2) at (-1.2, 1.9) [vert] {};
        \node at (-1.2,1.5)  {$b^2$};
        \node (e) at (0, 3.8) [vert] {};
        \node at (0, 4.2) {$e$};
        \node (b) at (1.2, 1.9) [vert] {};
        \node at (1.2, 1.5) {$b$};
        
        \path[black-r] (e) to (b);
        \path[black-r] (b) to (b2);
        \path[black-r] (b2) to (e);

        \node (ab) at (4.2, -1.4) [vert] {};
        \node at (4.2, -1.8) {$ab$};
        \node (ab2) at (1.8, -1.4) [vert] {};
        \node at (1.8, -1.8) {$ab^2$};
        \node (a) at (3, 0.5) [vert] {};
        \node at (3.2, 0.9) {$a$};
        
        \path[black-r] (a) to (ab);
        \path[black-r] (ab) to (ab2);
        \path[black-r] (ab2) to (a);
      
        \node (a2b) at (1.2, -4.7) [vert] {};
        \node at (1.2, -5.1) {$a^2b$};
        \node (a2b2) at (-1.2, -4.7) [vert] {};
        \node at (-1.2, -5.1) {$a^2b^2$};
        \node (a2) at (0, -2.8) [vert] {};
        \node at (-0.2, -2.3) {$a^2$};
        
        \path[black-r] (a2) to (a2b);
        \path[black-r] (a2b) to (a2b2);
        \path[black-r] (a2b2) to (a2);
        
        \path[r, -stealth, bend left] (e) to (a);
        \path[b, -stealth, bend right] (a) to (a2);
        \path[r, -stealth] (a2) to (a2b);
        \path[b, -stealth] (a2b) to (a2b2);
        \path[r, -stealth, bend left] (a2b2) to (a3b2);
        \path[b, -stealth] (a3b2) to (a3);
        
    \end{tikzpicture}
    \caption{A partial Cayley graph for $\Z_4 \times \Z_3$ - a 4-gon of 3-gons. The colored edges give an example of Player 1's strategy as described in \cref{theorem: Zn-Zm mod pm1 and 0}. The `{\color{red}red}' edges denote Player 1 moves and the `{\color{blue}blue}' edges denote Player 2 moves. By choosing the generator opposite to Player 2's last move, Player 1 will always ensure victory. Note as well that, in this example, the game word is currently equivalent to $a^3b^3 = a^3$, but Player 2 has not achieved a relator.}
    \label{fig: for thm 4.1}
\end{figure}
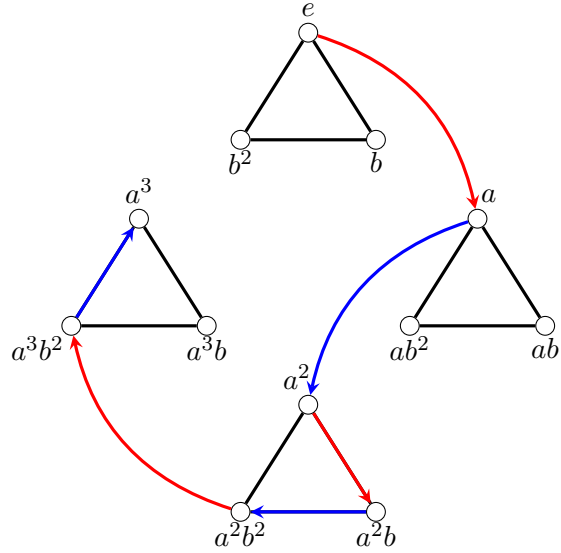

\begin{remark}\label{cor: Zn x Zm where m,n=2}

By \cref{thm: 2 player dihedral REL}, and the fact that the undirected Cayley graphs of $(D_{n}, \{r,s\})$ and $(\Z_n\times \Z_2, \{(1,0),(0,1)\})$ are isomorphic for $n\geq 3$, we also know the winner of $\REL(\Z_n \times \Z_2)$ for $n \geq 3$. Additionally, we know Player 2 has a winning strategy $\REL(\Z_2 \times \Z_2)$ since its undirected Cayley graph is equal to that of $(\Z_4, \{1\})$.
\end{remark}

Note that for $m\geq 4$, there are cases not covered by \cref{theorem: Zn-Zm mod pm1 and 0}. However, a small additional argument can be made to cover all cases when $m=4$, namely that Player 2 has a winning strategy when $n\equiv 2\mod 4$.
\begin{proposition} \label{cor: Zn x Z4}
For $\REL(\Z_n \times \Z_4, \{(1,0), (0,1)\})$, with $n \geq 3$, we have 
    \begin{enumerate}
        \item Player 1 has a winning strategy if $n \equiv \pm 1 \mod 4$.
        \item Player 2 has a winning strategy if $n \equiv 0 \mod 4$ or $n \equiv 2 \mod 4$.
    \end{enumerate}
\end{proposition}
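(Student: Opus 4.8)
For $n\equiv\pm1\pmod 4$ and for $n\equiv0\pmod4$ the statement is the case $m=4$ of \cref{theorem: Zn-Zm mod pm1 and 0} (its hypothesis $n\geq m-1$ reads $n\geq 3$). The new content is the case $n\equiv 2\pmod4$, so assume $n=4k+2\geq 6$; note $\gcd(n,4)=2$ and $\operatorname{lcm}(n,4)=2n$. Writing $a=(1,0)$ and $b=(0,1)$, the plan is for Player 2 to win by \emph{copying}: on every turn Player 2 replays the exact generator Player 1 just played. (The swapped mirror $a\leftrightarrow b$ used in \cref{theorem: Zn-Zm mod pm1 and 0} does not win here: because $\gcd(n,4)=2$, Player 1 can let the diagonal traversal reverse direction and land on an earlier vertex, which is exactly why that argument requires $4\mid n$.)

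I would first record the structure this strategy forces. Let $A_\ell,B_\ell$ be the net signed numbers of $a$- and $b$-moves Player 1 has made after their $\ell$th move, so $(A_\ell,B_\ell)$ is a lattice walk with unit axis-steps entirely controlled by Player 1. After the $\ell$th round the word is $\equiv_G a^{2A_\ell}b^{2B_\ell}$, so Player 2 stands on a vertex $q_\ell=a^{2A_\ell}b^{2B_\ell}$ of the order-$n$ subgroup $\langle a^2\rangle\times\langle b^2\rangle$. Every generator flips the coset of the index-$2$ subgroup $H=\langle ab\rangle=\{\,a^ub^v : u\equiv v\pmod2\,\}$, so vertices reached on even turns lie in $H$ while those on odd turns lie in the other coset. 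Hence a Player-1 vertex can never equal a Player-2 vertex, and every relator is completed either by Player 2 revisiting an earlier $q_\ell$, or by Player 1 revisiting an earlier vertex $p_\ell$ (its position after turn $2\ell-1$).

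The heart of the argument is that Player 1 cannot complete the first relator. Suppose it does, by reaching $p_\ell=p_k$ with $k<\ell$. Since $p_\ell$ is adjacent to $q_{\ell-1}$ and $p_k$ to $q_{k-1}$, both $q_{\ell-1}$ and $q_{k-1}$ are neighbours of $p_\ell$. If $q_{\ell-1}=q_{k-1}$ then Player 2 already completed a relator on turn $2(\ell-1)$, contradicting that Player 1's is the first. Otherwise $q_{\ell-1}$ and $q_{k-1}$ are distinct vertices with a common neighbour, hence at distance $2$ (the Cayley graph is triangle-free for $n\geq 4$); and since $q_{\ell-1}q_{k-1}^{-1}=a^{2\Delta A}b^{2\Delta B}$ has both exponents even, it must equal $a^{\pm2}$ or $b^{\pm2}$. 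In each of these four cases I would identify which generators Player 1 played in rounds $k$ and $\ell$ (for instance $a^{2}$ forces $a$ in round $k$ and $a^{-1}$ in round $\ell$, and the no-backtracking rule then forces $\ell\geq k+2$) and check that the reduced data $(A\bmod n/2,\;B\bmod 2)$ must then coincide at rounds $k$ and $\ell-1$, so that Player 2 completed a relator by turn $2(\ell-1)<2\ell-1$, again a contradiction. Consequently the first relator is completed on an even turn and Player 2 wins; one can add that, by pigeonhole on the $n$ possible reduced pairs $(A\bmod n/2,B\bmod2)$, Player 2 wins no later than turn $2n$.

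The main obstacle is that last step: verifying that the dichotomy ``$q_{\ell-1}q_{k-1}^{-1}\in\{a^{\pm2},b^{\pm2}\}$'' is genuinely exhaustive --- this is precisely where $\gcd(n,4)=2$, the evenness of the exponents, and triangle-freeness all enter --- and then pushing through the short case analysis on Player 1's two relevant moves while being careful that the degenerate index $\ell=k+1$ is excluded by the backtracking rule. Everything else (the coset/parity bookkeeping and the observation that the finite Cayley graph forces a relator eventually) is routine once the setup is fixed.
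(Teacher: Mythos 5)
Your proposal is correct, but for the genuinely new case $n\equiv 2\pmod 4$ it takes a different route from the paper. The paper stays close to \cref{theorem: Zn-Zm mod pm1 and 0}: Player 2 repeats Player 1's opening generator and thereafter plays the \emph{opposite} generator, maintaining a word $\equiv_G a^sb^t$ with $|s-t|=2$, and wins at a prescribed position ($a^{4k+2}b^{4k}\equiv_G e$ on turn $8k+2$, respectively $a^{4k+2}b^{4k+2}$ on turn $8k+4$), the verification that Player 1 cannot interrupt being analogous to the theorem. You instead have Player 2 copy Player 1's move verbatim on every turn and argue structurally: the parity homomorphism onto $\Z_2$ puts Player 1's and Player 2's positions in different cosets, so any relator is a self-revisit by one player; and if Player 1's revisit $p_\ell=p_k$ were the first relator, then the generators $g_\ell=q_{\ell-1}^{-1}p_\ell$ and $g_k=q_{k-1}^{-1}p_k$ satisfy $g_kg_\ell^{-1}=q_{k-1}^{-1}q_{\ell-1}\in\{e,a^{\pm2},b^{\pm2}\}$ (the mixed products $a^{\pm1}b^{\pm1}$ are excluded because both exponents of $q_{\ell-1}q_{k-1}^{-1}$ are even; triangle-freeness is not actually needed), and each case forces $q_{\ell-1}=q_{k-1}$ or, when $g_\ell=g_k^{-1}$, $q_{\ell-1}=q_k$ with the no-backtracking rule excluding $\ell=k+1$ --- in either case an earlier Player 2 relator, a contradiction; pigeonhole on the $n$ reduced pairs $(A\bmod n/2,\,B\bmod 2)$ then makes Player 2 the first to close a cycle, by turn $2n$. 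I checked the case analysis you defer and it does go through, so the plan is sound. Comparing the two: the paper's offset-mirror proof pinpoints the exact winning vertex and turn, while your copying argument is a shorter invariant-based proof that, as a bonus, uses only that $n$ is even, so it simultaneously re-proves the $m=4$, $n\equiv 0\pmod 4$ case and yields an explicit bound on the game length; your parenthetical observation that the unmodified swap-mirror of \cref{theorem: Zn-Zm mod pm1 and 0} fails when $n\equiv 2\pmod 4$ is also accurate and explains why a new idea is needed here.
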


\begin{proof} Let $G \cong \Z_n \times \Z_4$ and let $a$ and $b$ denote the generators for $G$, with $a^n=e$ and $b^4=e$.

By \Cref{theorem: Zn-Zm mod pm1 and 0} the only case left to prove is that of $n \equiv 2 \mod 4$. Let $n = 4k+2$, with $k \geq 1$. We describe a winning strategy for Player 2 dependent on Player 1 choosing $a$ or $b$ for their first turn. The arguments for Player 1 choosing $a^{-1}$ or $b^{-1}$ is symmetric.

Suppose Player 1 chooses $a$. Player 2's strategy is to ensure that the word $w_{2\ell} \equiv_G a^sb^t$ has the property that $s-t=2$. Hence, for their first turn, Player 2 will choose $a$. 
On subsequent turns, Player 2 will choose $a$ if Player 1 chose $b$ and will choose $b$ if Player 1 chose $a$. 
By a similar argument to that in the proof of \cref{theorem: Zn-Zm mod pm1 and 0}, Player 2 will win on turn $\ell =8k+2$ at $a^{4k+2}b^{4k}$.

Now suppose Player 1 chooses $b$ for their first turn. Then Player 2's strategy here is to ensure that $w_{2\ell} \equiv_G a^sb^t$ has $t-s=2$. Hence, Player 2 will choose $b$ for their first turn and then will choose the opposing generator to Player 1's choice on their subsequent turns.
Player 2 then wins on turn $\ell = 8k+4$ at $a^{4k+2}b^{4k+2}$.
\end{proof}

\section{Three-Player $\REL$ for Dihedral Groups}
\label{sec: three-player REL}
In this section, we examine the $\REL$ game for dihedral groups with three players.
We first modify the definition of $\REL$ to include $n$ players as first done by Benesh and Gaetz in \cite{BG18}. The first player to complete a relator still wins the game. The following player is runner-up. The next player is third, and so on. This ranking is important because it gives each player a preference for who wins. If a player cannot ensure victory for themselves, then they will assist the player who ensures their highest possible ranking to win the game. 

For three players, this is a bit simpler. Since there are only two opponents for a given player, if that player cannot win, then they will play to help their preferred opponent win so that they finish second instead of last. 

 When referring to the \textit{Relator Achievement Game for $n$ Players} for a group $G$ and generating set $S$, we use the notation $\REL_n(G,S)$.

\begin{remark}\label{rem: 3 player no third edge}
Note that \Cref{rem: no third edge} still holds. A player will never prefer to be last; completing the third edge of a square leads to the next player winning and thus a last place finish. Thus, no player will complete a third edge of a square if it can be avoided. Due to this, \cref{rem: $s$ guarantees next 2 moves} still holds. That is, a choice of the generator $s$ forces the following two moves.
\end{remark}

\begin{theorem} \label{thm:3-player REL}
Player 1 has a winning strategy for $\REL_3(D_n, \{r,s\})$ if $n$ is odd, and Player 3 has a winning strategy if $n$ is even.
\end{theorem}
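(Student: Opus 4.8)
The plan is to lift the combinatorial machinery of \cref{thm: 2 player dihedral REL} to three players. By \cref{rem: 3 player no third edge} no player ever traverses a third edge of a square except as a winning move, so exactly as in the two-player analysis, once the first $r^{\pm 1}$-edge has been played the walk proceeds monotonically around the prism $\Gamma(D_n,\{r,s\})$ in a single rotational direction, and any choice of $s$ forces the next two moves to be $r^{\pm 1}$ in that direction (\cref{rem: $s$ guarantees next 2 moves}). Consequently play can end only by completing one of the two $n$-cycles, or a cycle obtained from these by splicing in ``$s$-excursions'' (leave a level via $s$, run forward, return via $s$), and --- since each excursion replaces $k$ forward moves by the same $k$ moves on the other level together with two $s$-moves --- each complete excursion lengthens the game by exactly two turns. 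Writing $L$ for the turn on which the first relator appears, the winner is Player $((L-1)\bmod 3)+1$, so the theorem reduces to showing that the designated player can force $L\equiv 1\mod 3$ when $n$ is odd and $L\equiv 0\mod 3$ when $n$ is even, against arbitrary play by the other two.

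The starting point is that straight play (everyone chooses $r$) gives $L=n$, which already settles $n\equiv 1\mod 6$ (a Player~1 win) and $n\equiv 0\mod 6$ (a Player~3 win). In the remaining cases the designated player must force the eventual cycle to be longer by the right amount; since each $s$-excursion contributes $+2$ and $\gcd(2,3)=1$, arranging for $0$, $1$, or $2$ net excursions realizes any residue of $L$ modulo $3$, and one checks that Player~1 wants $1$ extra excursion when $n\equiv 5\mod 6$ and $2$ when $n\equiv 3\mod 6$, while Player~3 wants $2$ when $n\equiv 2\mod 6$ and $1$ when $n\equiv 4\mod 6$. Placing these excursions at the correct positions is exactly what \cref{lem: 3 forward 1 over} is for: its proof shows that entering a square at a prescribed vertex lets a player force entry into the square three steps further along at the opposite vertex, the tempo-preserving device that produced the $n\equiv 2\mod 6$ phenomenon in the two-player theorem.

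Concretely, for the odd case I would have Player~1 open with $r$, march forward, and use the device of \cref{lem: 3 forward 1 over} to insert $s$-rungs on Player~1's own turns so as to keep the number of excursions still to be accounted for (taking into account any rungs Players~2 and~3 have inserted) synchronized with the current position, and to steer the approach to the wrap-around so it falls on a turn $\equiv 1\mod 3$; in the language of \cref{thm: 2 player dihedral REL} this amounts to tracking which triples (square, vertex, turn-mod-$3$) are wins for Player~1 and showing Player~1 can always reach one. For the even case Player~3 plays the analogue, exploiting that Player~3 is the third mover in every block of three turns to respond to Players~1 and~2, inserting or withholding $s$-rungs at Player~3's turns so that the first completed cycle has length $\equiv 0\mod 3$; here $n$ even makes $\Gamma(D_n,\{r,s\})$ bipartite, which restricts the attainable cycle lengths and streamlines the bookkeeping.

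The main obstacle is precisely this interference analysis. In the two-player games an unconditional rule (``always choose $s$'' for $\RAV$, ``always choose $r$'' for odd $\REL$) sufficed, but with three players the designated player faces two opponents who may cooperate --- opening excursions of their own, or racing to make the wrap-around move themselves --- in order to push $L$ into a bad residue. The heart of the argument is a bookkeeping lemma asserting that whenever an opponent opens an $s$-excursion, the forced-move structure of \cref{rem: $s$ guarantees next 2 moves} returns the next genuine choice to the designated player with enough tempo to correct the count, and that the designated player can always arrange for the wrap-around to occur on their own turn. Establishing this requires following turn numbers modulo~$3$ through every configuration permitted by \cref{rem: 3 player no third edge}, and it is this modulo-$3$ bookkeeping --- one step up from the modulo-$2$ parity of the two-player case --- that forces the case split according to $n$ modulo $6$.
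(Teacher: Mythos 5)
Your framing of the even case mis-models the game, and this is a genuine gap rather than a stylistic difference. You set out to show that Player 3 can force the first relator to appear on a turn $\equiv 0 \bmod 3$ \emph{against arbitrary play by the other two}. That statement is false, and the paper never claims it: in the even case the paper first proves that Player 1 can never win, and only then invokes the $n$-player scoring convention (a player who cannot win plays to secure the highest possible ranking) to conclude that Player 1 will steer the game toward a Player 3 win so as to finish second rather than third. Indeed, the paper's own case analysis shows that if Player 1 opens with $r^{\pm 1}$ and then plays into Player 2's hands --- or opens with $s$ and later switches to $r^{\pm 1}$ --- then \emph{Player 2} wins; so a coalition of Players 1 and 2 can certainly prevent a Player 3 win, and no purely adversarial argument of the kind you propose can exist. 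Any correct proof of the even case must use the preference structure explicitly, which your proposal never does.

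The odd case also has unfinished business. Your claim that straight play ``already settles'' $n\equiv 1 \bmod 6$ assumes the opponents cooperate by always playing $r$; they need not, and each $s$-insertion or early wrap-around by an opponent changes the residue of $L$ you are trying to control. You acknowledge this interference as ``the main obstacle'' and then posit a bookkeeping lemma (that the designated player always regains enough tempo to correct the count and can claim the wrap-around move) without proving it --- but that lemma \emph{is} the theorem. For comparison, the paper's odd-case strategy is unconditional and explicit: Player 1 simply plays $s$ every turn, and the forced-move structure of \cref{rem: 3 player no third edge} pins down exactly which vertices Player 1 reaches, with a case split on $n \bmod 4$ (not $n \bmod 6$) showing Player 1 arrives at the decisive position near $r^{n-3}$. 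So the residue-of-$L$ accounting you sketch is plausible as a heuristic, but as written the proposal neither proves the odd case nor correctly formulates the even case.
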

\begin{proof}
First, we suppose that $n$ is odd. We will consider the cases where $n\equiv 1\mod 4$ and $n\equiv 3\mod 4$, separately. In either case, we will show that Player 1 has a winning strategy by choosing the generator $s$ every turn until a winning move is available. The key ingredient to this proof is \Cref{rem: 3 player no third edge}. As in the proof of \Cref{lem: 3 forward 1 over} we assume, without loss of generality, that players choose words equivalent to $r^is^j$ with $0\leq i\leq n-1$, $0\leq j\leq 1$, where $i$ is non-decreasing. 

Suppose $n = 4k+1$ for some integer $k\geq 1$. We first note that Player 1 ensures victory by choosing $s$ to move from a word equivalent to $r^{n-3}s$ to a word equivalent to $r^{n-3}$. Indeed, by \Cref{rem: 3 player no third edge}, Player 2 is forced to choose $r$ to move to $r^{n-2}$ and Player 3 must choose $r$ or $s$ to move to a losing position in $r^{n-1}$ or $r^{n-2}s$, whereby Player 1 wins at $e$ or $r^{n-3}s$, respectively.

Due to Player 1's strategy and \cref{rem: 3 player no third edge}, we note that Player 1 can guarantee moving from $e$ to $s$, $r^2s$ to $r^2$, and, in general, from $r^{\ell}$ to $r^{\ell}s$ where $\ell\equiv 0\mod 4$ or $r^m s$ to $r^m$ where $m \equiv 2\mod 4$. Since $n-3 = 4k-2\equiv 2\mod 4$, Player 1 guarantees moving from $r^{n-3}s$ to $r^{n-3}$ and thus wins as described above.

Now suppose $n=4k+3$ for some integer $k \geq 1$. In this case, we note that Player 1 ensures victory by moving from $r^{n-3}$ to $r^{n-3}s$. By \Cref{rem: 3 player no third edge}, Player 2 is forced to choose $r^{-1}$ to move to $r^{n-2}s$ and Player 3 must choose $r^{-1}$ or $s$ to move to a losing position in $r^{n-1}s$ or $r^{n-2}$, whereby Player 1 wins at $s$ or $r^{n-3}$, respectively. In general, Player 1 guarantees moving to the same words as in the $n = 4k+1$ case. However, in this case $n-3 = 4k \equiv 0\mod 4$, so Player 1 guarantees moving from $r^{n-3}$ to $r^{n-3}s$ and thus wins.

If $n$ is even, then the preference of a non-winning player is important in deciding the game. We show that Player 1 can never win and thus will help Player 3 win in order to finish second instead of finishing third. We consider the cases where $n\equiv 2\mod 4$ and $n\equiv 0\mod 4$ separately.

Suppose $n = 4k+2$ for some integer $k \geq 1$. We begin by showing that Player 1 will always finish last if they ever choose $r$ or $r^{-1}$. On the other hand, if Player 1 always chooses $s$, they can guarantee victory for Player 3, thus securing a runner-up finish, which is preferable to finishing last. Thus Player 3 will win in this case due to the preferences of Player 1.

We first consider the case where Player 1 begins by choosing $r$ or $r^{-1}$. In this case, Player 2 has a winning strategy similar to the odd cases by choosing $s$ successively until the winning move. Player 2 guarantees moving to words equivalent to $r^{\ell}s$ where $\ell\equiv 1\mod 4$ and $r^m$ where $m\equiv 3 \mod 4$ by \cref{rem: 3 player no third edge}. Since $n-3 = 4k-1 \equiv 3\mod 4$, Player 2 guarantees moving from $r^{n-3}s$ to $r^{n-3}$, which is a winning move.

We now consider the case where Player 1 begins by choosing $s$ for some number of turns, but then chooses $r$ or $r^{-1}$ on a later turn. That is, upon choosing $r$ or $r^{-1}$, Player 1 moves from $r^{\ell}$ to $r^{\ell + 1}$ via $r$ where $\ell\equiv 0\mod 4$ or from $r^ms$ to $r^{m+1}s$ via $r^{-1}$ where $m\equiv 2\mod 4$. Player 2 therefore, by always choosing $s$ after, moves to $r^{\ell+1}s$ where $\ell\equiv 0\mod 4$ and $r^{m+1}$ where $m\equiv 2\mod 4$. Since $n-3\equiv 3\mod 4$, Player 2 guarantees moving from $r^{n-3}s$ to $r^{n-3}$, which is a winning move.

Having shown that Player 1 finishes last if they do not always choose $s$, we now assume Player 1 does always choose $s$. Then Player 1 ensures that Player 3 wins by moving from $r^{n-2}$ to $r^{n-2}s$, thereby forcing Player 2 to move from $r^{n-2}s$ to $r^{n-1}s$ or to $r^{n-3}s$. Thereafter, Player 3 wins at $s$ or at $r^{n-3}$. Since $n \equiv 2\mod 4$, as shown in previous cases, Player 1 guarantees arriving at $r^{n-2}$ from $r^{n-2}s$.

Now suppose $n=4k$ for some integer $k \geq 1$. We show by exhaustion that Player 1 can never win. However, they can ensure a runner-up finish via a Player 3 win.

There are four losing cases for Player 1:
\begin{enumerate}
    \item[(1)] If Player 1 always chooses $s$, then Player 3 has a winning strategy.
    \item[(2)] If Player 1 starts with $s$ but later chooses $r$ or $r^{-1}$, then Player 2 has a winning strategy.
    \item[(3)]  If Player 1 starts with $r$ or $r^{-1}$ and Player 2 always chooses $s$, then Player 3 has a winning strategy.
    \item[(4)] If Player 1 starts with $r$ or $r^{-1}$ and Player 2 chooses $r$ or $r^{-1}$ at some point, then Player 3 has a winning strategy.
\end{enumerate}

(1) Using \Cref{rem: 3 player no third edge} as in the previous cases, if Player 1 chooses $s$ every turn, then Player 1 guarantees moving to $r^{n-2}$ from $r^{n-2}s$ since $n-2\equiv 2 \mod 4$. Player 2 is forced to move to $r^{n-1}$ or $r^{n-3}$, and Player 3 wins at $e$ or $r^{n-3}s$.

(2) Suppose Player 1 begins by choosing $s$ for some number of turns, then later chooses $r$ or $r^{-1}$. As described in the $n=4k+2$ case, Player 1 moves to $r^{\ell}$ where $\ell\equiv 1\mod 4$ or $r^ms$ where $m\equiv 3 \mod 4$. Player 2 then moves to $r^{\ell}s$ or $r^m$ by choosing $s$. Since $n-3\equiv 1 \mod 4$, Player 2 guarantees moving from $r^{n-3}$ to $r^{n-3}s$, which is a winning move.

(3) Without loss of generality, suppose Player 1 starts with $r$ and Player 2 chooses $s$ at each turn. Then Player 2 moves to $r^{\ell}s$ where $\ell\equiv 1\mod 4$ and $r^m$ where $m\equiv 3\mod 4$. In particular, Player 2 will move to $r^{4k-1}$, thus ensuring that Player 3 wins on the next turn. 

(4) Without loss, suppose Player 1 opens with $r$. Then Player 2 must choose $r$ or $r^{-1}$ at some point (if they hope to avoid finishing last, by case (3) above). A choice of $r$ by Player 2 will move to $r^{\ell}$ where $\ell\equiv 2\mod 4$ while a choice of $r^{-1}$ will move to $r^ms$ where $m\equiv 0\mod 4$. Player 3 then has a winning strategy by choosing $s$ unless another move wins the game. Using this strategy, Player 3 moves to $r^{\ell}s$ and $r^m$ where $\ell\equiv 2\mod 4$ and $m\equiv 0 \mod 4$. Since $n-2\equiv 2\mod 4$, Player 3 guarantees moving to $r^{n-2}s$. This forces Player 1 to move to $r^{n-1}s$ and Player 2 to move to either $s$ or $r^{n-1}$, whereby Player 3 wins at $e$.
\end{proof}

As one can see from the proof of \cref{thm:3-player REL}, the game is considerably more complex after introducing just one more player. 
This complexity lies in the fact that there is no extension of \cref{rem: no third edge} that allows a player to force more than the next two moves. Hence, it is unclear how a player's move will affect their following move unless we consider the preferences of all players.

\section{Open Questions}
\label{sec: open questions}

\begin{itemize}
 \item When first devising the games $\REL$ and $\RAV$, we wanted to create a combinatorial game that utilized the Cayley graph of a group. Although the Cayley graph is not necessary in defining our relator games, we have found it useful when constructing some of our proofs. To that end, one can study the \textit{make a cycle} and \textit{avoid a cycle} games on general graphs. We expect these games to be more challenging to study due to the absence of properties such as graph regularity and symmetry inherent in Cayley graphs.

    \item A fundamental problem in combinatorial game theory for impartial games is to find the nim-number of a game (see \cite{Sie13}). These allow one to determine the outcome of the game as well as of game sums. While we have determined the outcome of the games $\REL$ and $\RAV$ for several families of groups, we leave open the problem of computing their nim-numbers.
    
    \item Another goal is to extend results on $\REL$ and $\RAV$ to $\REL_n$ and $\RAV_n$, where more than two players are allowed. For the dihedral groups, this becomes difficult after more than three players are involved since a player can only force moves two ahead and thus loses control over their future moves.
    Note that the related games $\GEN$ and $\DNG$ for $n$-players have been studied in \cite{BG18}. 
    
    \item One can of course ask for the outcomes of $\REL$ and $\RAV$ on other families of finite groups. Of specific interest are the generalized dihedral groups (see \cref{ex: semi-direct and generalized dihedral}). We have results for $\RAV$ for generalized dihedral groups via \cref{thm: RAV any group with order two}.
    For a finite generalized dihedral group $G \cong H \rtimes \Z_2$, suppose we have a winning strategy for $\REL(H,T)$. Can we then determine a winning strategy for $\REL(G,S)$ in a manner similar to that of \cref{thm: RAV any group with order two}? 
    
    \item In computing several game trees while working through examples, we observed that most games of $\REL$ end after traversing at most half the vertices in the Cayley graph. We have also observed that the game seems to be less complex when more generators are involved. These lead to interesting questions from a computational point of view. Can one find winning strategies utilizing a minimal number of moves or find a correlation between sparseness of the Cayley graph and computational complexity of the game?
    
    \item One can certainly explore the games $\REL$ and $\RAV$ via a computer program. The authors have done some preliminary work on this with College of Wooster undergraduates Minhwa Lee and Pavithra Brahmananda Reddy on this project \footnote{mlee21@wooster.edu, pbrahmanandareddy22@wooster.edu. The second author would like to thank the College of Wooster Sophomore Research Program for helping finance Minhwa Lee's and Pavithra Brahmananda Reddy's work.}. One avenue is to apply machine learning techniques such as reinforcement learning to create an A.I. for different families of groups.
    
    \item Both authors have incorporated Cayley graphs into their abstract algebra courses. Using the games of $\REL$ and $\RAV$ for Cayley graphs of dihedral groups and symmetric groups is an alternative way of getting students to practice understanding the structure of these groups. A structured and rigorous implementation of such an approach is a possible direction for pedagogical research.
\end{itemize}

\bibliography{CayleyGames.bib}
\bibliographystyle{plain}

\Addresses

\end{document}